\newcommand{\Yp}{Y_{+}}
\newcommand{\Ym}{Y_{-}}
\newcommand{\Yz}{Y_{z}}
\newcommand{\complex}{\mathbb{C}}
\newcommand{\naturals}{\mathbb{N}}
\newcommand{\integers}{\mathbb{Z}}
\newcommand{\angles}[1]{\left\langle #1 \right\rangle}
\newcommand{\para}[1]{\left(#1\right)}
\newcommand{\paraa}[1]{\big(#1\big)}
\newcommand{\parab}[1]{\Big(#1\Big)}
\newcommand{\parac}[1]{\bigg(#1\bigg)}
\newcommand{\spacearound}[1]{\quad#1\quad}
\newcommand{\equivalent}{\spacearound{\Leftrightarrow}}
\renewcommand{\implies}{\spacearound{\Rightarrow}}
\newcommand{\qtext}[1]{\quad\text{#1}\quad}
\newcommand{\qand}{\qtext{and}}
\newtheorem{theorem}{Theorem}[section]
\newtheorem{lemma}[theorem]{Lemma}
\newtheorem{proposition}[theorem]{Proposition}
\theoremstyle{definition}
\newtheorem{definition}[theorem]{Definition}
\theoremstyle{remark}
\newtheorem{remark}[theorem]{Remark}
\numberwithin{equation}{section}
\renewcommand{\mid}{\mathds{1}}
\newcommand{\Xp}{X_{+}}
\newcommand{\Xpm}{X_{\pm}}
\newcommand{\Xm}{X_{-}}
\newcommand{\Xz}{X_{z}}
\newcommand{\omegap}{\omega_{+}}
\newcommand{\omegam}{\omega_{-}}
\newcommand{\omegaz}{\omega_{z}}
\newcommand{\Sthreeq}{S^{3}_{q}}
\newcommand{\Stq}{\Sthreeq}
\newcommand{\TSthreeq}{T\Sthreeq}
\newcommand{\TStq}{\TSthreeq}
\newcommand{\Stwoq}{S^{2}_{q}}
\newcommand{\Uqsu}{\mathcal{U}_q(\textrm{su}(2))}
\newcommand{\A}{\mathcal{A}}
\newcommand{\lt}{\triangleright}
\newcommand{\rt}{\triangleleft}
\newcommand{\one}[1]{#1_{(1)}}
\newcommand{\two}[1]{#1_{(2)}}
\newcommand{\nablat}{\tilde{\nabla}}
\newcommand{\nablap}{\nabla_{+}}
\newcommand{\nablam}{\nabla_{-}}
\newcommand{\nablaz}{\nabla_{z}}
\newcommand{\nzero}{\nabla^0}
\newcommand{\OmegaStq}{\Omega^1(\Sthreeq)}
\newcommand{\Gammat}{\widetilde{\Gamma}}
\renewcommand{\d}{\partial}
\newcommand{\rtr}{\triangleright}
\newcommand{\ltr}{\triangleleft}
\newcommand{\Mat}{\operatorname{Mat}}
\newcommand{\htilde}{\tilde{h}}
\newcommand{\sigmap}{\sigma_{+}}
\newcommand{\sigmam}{\sigma_{-}}
\newcommand{\sigmaz}{\sigma_{z}}
\newcommand{\sigmapm}{\sigma_{\pm}}
\newcommand{\sigmas}{\sigma^{\ast}}
\newcommand{\sigmasp}{\sigmas_{+}}
\newcommand{\sigmasm}{\sigmas_{-}}
\newcommand{\sigmaspm}{\sigmas_{\pm}}
\newcommand{\sigmasz}{\sigmas_{z}}
\newcommand{\sigmah}{\hat{\sigma}}
\newcommand{\sigmahpm}{\sigmah_{\pm}}
\newcommand{\sigmahz}{\sigmah_{z}}
\newcommand{\sigmahs}{\sigmah^{\ast}}
\newcommand{\sigmahsp}{\sigmah^\ast_{+}}
\newcommand{\sigmahsm}{\sigmah^\ast_{-}}
\newcommand{\sigmahsz}{\sigmah^\ast_{z}}
\newcommand{\Bp}{B_+}
\newcommand{\Bm}{B_-}
\newcommand{\Bz}{B_0}
\renewcommand{\L}{\mathcal{L}}
\newcommand{\eh}{\hat{e}}
\newcommand{\gone}{\gamma^{(1)}}
\newcommand{\gtwo}{\gamma^{(2)}}
\newcommand{\gthree}{\gamma^{(3)}}
\newcommand{\thalf}{\tfrac{1}{2}}
\newcommand{\g}{\mathfrak{g}}
\title{On $q$-deformed Levi-Civita connections}
\date{5 May 2020}
\author{Joakim Arnlind, Kwalombota Ilwale and Giovanni Landi}
\address[Joakim Arnlind]{Dept. of Math.\\
Link\"oping University\\
581 83 Link\"oping\\
Sweden}
\email{joakim.arnlind@liu.se}
\address[Kwalombota Ilwale]{Dept. of Math.\\
Link\"oping University\\
581 83 Link\"oping\\
Sweden}
\email{kwalombota.ilwale@liu.se}
\address[Giovanni Landi]
{Matematica, Universit\`a di Trieste, 
\newline \indent Via A. Valerio, 12/1, 34127  Trieste, Italy 
\newline \indent Institute for Geometry and Physics (IGAP) Trieste, Italy 
and INFN, Trieste, Italy
}
\email{landi@units.it}
\subjclass[2000]{}
\keywords{}
\begin{document}

\maketitle

\begin{abstract}
  We explore the possibility of introducing $q$-deformed connections
  on the quantum 2-sphere and 3-sphere, satisfying a twisted Leibniz'
  rule in analogy with $q$-deformed derivations. We show that such
  connections always exist on projective modules.  Furthermore, a
  condition for metric compatibility is introduced, and an explicit
  formula is given, parametrizing all metric connections on a free
  module. For the module of 1-forms on the quantum 3-sphere, a
  $q$-deformed torsion freeness condition is introduced and we derive
  explicit expressions for the Christoffel symbols of a Levi-Civita
  connection for a general class of metrics satisfying a certain
  reality condition. Finally, we construct metric connections on a
  class of projective modules over the quantum 2-sphere.
\end{abstract}

\tableofcontents
\parskip = .55 ex

\section{Introduction}

\noindent
In recent years, a lot of progress has been made in understanding the
Riemannian aspects of noncommutative geometry. These are not only
mathematically interesting, but also important in physics where
noncommutative geometry is expected to play a key role, notably in a
theory of quantum gravity. In Riemannian geometry the Levi-Civita
connection and its curvature have a central role, and it turns
out that there are several different ways of approaching these objects
in the noncommutative setting (see e.g.
\cite{cff:gravityncgeometry,dvmmm:onCurvature,m:nc.spin.q-sphere,ac:ncgravitysolutions,bm:starCompatibleConnections,r:leviCivita,aw:curvature.three.sphere,bgm:levi-civita.class.spectral.triples,bgl2020}).

From an algebraic perspective, the set of vector fields and the set of
differential forms are (finitely generated projective) modules over
the algebra of functions, a viewpoint which is also adopted in
noncommutative geometry. However, considering vector fields as
derivations does not immediately carry over to noncommutative
geometry, since the set of derivations of a (noncommutative) algebra
is in general not a module over the algebra but only a module over the
center of the algebra. Therefore, one is lead naturally to focus on
differential forms and define a connection on a general module as
taking values in the tensor product of the module with the module of
differential forms. More precisely, let $M$ be a (left) $\A$-module
and let $\Omega^1(\A)$ denote a module of differential forms together
with a differential $d:\A\to\Omega^1(\A)$. A connection on $M$ is a
linear map $\nabla:M\to\Omega^1(\A)\otimes M$ satisfying a version of
Leibniz rule
\begin{align}\label{eq:diff.form.Leibniz}
  \nabla(fm) = f\nabla m + df\otimes m
\end{align}
for $f\in\A$ and $m\in M$. In differential geometry, for a vector
field $X$ one obtains a covariant derivative $\nabla_X:M\to M$, by
pairing differential forms with $X$ (as differential forms are
dual to vector fields). In a noncommutative version of the above,
there is in general no canonical way of obtaining a ``covariant
derivative'' $\nabla_X:M\to M$. In a derivation based approach to
noncommutative geometry (see
e.g. \cite{dv:calculDifferentiel,dvmmm:onCurvature}), one puts
emphasis on the choice of a Lie algebra $\g$ of derivations of the
algebra $\A$. Given a (left) $\A$-module $M$ one defines a connection
as a map $\nabla:\g\times M\to M$, usually writing
$\nabla(\d,m) = \nabla_\d m$ for $\d\in\g$ and $m\in M$, satisfying
\begin{align}
\nabla_{\d}(fm) = f\nabla_{\d}m + \d(f) \, m  
\end{align}
for $f\in\A$ and $m\in M$, in parallel with \eqref{eq:diff.form.Leibniz}.

For quantum groups, it turns out that natural analogues of vector
fields are not quite derivations, but rather maps satisfying a
twisted Leibniz rule. For instance, as we shall see, for the quantum
3-sphere $\Stq$ one defines maps $X_a:\Stq\to\Stq$ satisfying
\begin{align}\label{eq:Xp.def.Leibniz}
  X_a(fg) = f X_a(g) + X_a(f) \sigma_a(g)
\end{align}
for $f,g\in\Stq$, where $\sigma_a:\Stq\to\Stq$, for $a=1,2,3$, are
algebra morphisms.  Thus, in this note, we explore the possibility of
introducing a corresponding $q$-affine connection on a (left)
$\Stq$-module $M$. That is, motivated by \eqref{eq:Xp.def.Leibniz} we
introduce a covariant derivative $\nabla_{X_a}:M\to M$ such that
\begin{align}
  \nabla_{X_a}(fm) = f\nabla_{X_a}m + \Xp(f)\sigmah_a(m)
\end{align}
for $f\in\Stq$ and $m\in M$, where $\sigmah_a$ denotes an extension of
$\sigma_a$ to the module $M$ (cf. Section
\ref{sec:q.affine.connections}). In the following, we make these ideas
precise and prove that there exist $q$-affine connections on
projective modules. Furthermore, we introduce a condition for metric
compatibility, and in the particular case of a left covariant calculus
over $\Stq$, we investigate a derivation based definition of torsion.
Then we explicitly construct a Levi-Civita connection, that is
a torsion free and metric compatible connection. Moreover, we
construct metric connections on a class of projective modules over the
quantum 2-sphere. We note that the Riemannian geometry of quantum
spheres can be studied \cite{bm:starCompatibleConnections} from the
point of view of a bimodule connection on differential forms
satisfying \eqref{eq:diff.form.Leibniz} as well as a right Leibniz
rule twisted by a braiding map.

\section{The quantum 3-sphere}

\noindent
In this section we recall a few basic properties of the quantum
3-sphere \cite{w:twisted.su2}. The algebra $\Sthreeq$ is a unital $\ast$-algebra generated
by $a,a^\ast,c,c^\ast$ fulfilling
\begin{alignat*}{3}
&ac = qca &\qquad & c^*a^* =qa^*c^* &\qquad &ac^* =qc^*a \\
&ca^* =qa^*c & &cc^* = c^*c & &a^*a+c^*c =
aa^*+q^{2}cc^* =\mid 
\end{alignat*}
for a real parameter $q$. The identification of $\Sthreeq$ with the
quantum group $SEE_q(2)$ is via the Hopf algebra structure given by
\begin{alignat*}{2}
  &\Delta(a) = a\otimes a-qc^\ast\otimes c &\qquad
  &\Delta(c) = c\otimes a+a^\ast\otimes c\\
  &\Delta(a^\ast) = -qc\otimes c^\ast+a^\ast\otimes a^\ast &\qquad
  &\Delta(c^\ast) = a\otimes c^\ast+c^\ast\otimes a^\ast
\end{alignat*}
with antipode and counit
\begin{alignat*}{4}
  &S(a) = a^\ast &\qquad
  &S(c) = -qc &\qquad
  & \epsilon(a)=1 &\qquad
  & \epsilon(c)=0 \\
  &S(a^\ast) = a &\qquad
  &S(c^\ast) = -q^{-1}c^\ast &\qquad
  & \epsilon(a^\ast)=1 &\qquad
  & \epsilon(c^\ast)=0.
\end{alignat*}
Furthermore, the quantum enveloping algebra $\Uqsu$ is the $\ast$-algebra with generators
$E,F,K,K^{-1}$ satisfying
\begin{align*}
  &K^{\pm 1}E = q^{\pm 1}EK^{\pm 1}\qquad
    K^{\pm 1}F = q^{\mp 1}FK^{\pm 1}\qquad
  [E,F] = \frac{K^2-K^{-2}}{q-q^{-1}} .
\end{align*}
The corresponding Hopf algebra structure is given by the coproduct, 
\begin{align*}
  \Delta(E) = E\otimes K + K^{-1}\otimes E \qquad
  \Delta(F) = F\otimes K + K^{-1}\otimes F
  \qquad \Delta(K^{\pm 1}) = K^{\pm 1}\otimes K^{\pm 1}
\end{align*}
together with antipode and counit
\begin{alignat*}{3}
  &S(K) = K^{-1} &\qquad &S(E) = -qE &\qquad &S(F) = -q^{-1}F   \\
  &\epsilon(K) = 1 & &\epsilon(E) = 0 & &\epsilon(F) = 0.
\end{alignat*}
We recall that there is a unique bilinear pairing between $\Uqsu$ and
$\Sthreeq$ given by
\begin{alignat*}{2}
&\angles{K^{\pm 1}, a} = q^{\mp\, 1/2} &\qquad 
&\angles{K^{\pm 1}, a^*} = q^{\mp\, 1/2}\\
&\angles{E,c} = 1 &  &\angles{F,c^*} = -q^{-1}, 
\end{alignat*}
with the remaining pairings being zero. The pairing
induces a $\Uqsu$-bimodule structure on $\Sthreeq$ given by
\begin{align}\label{actions}
  h\lt f = \one{f}\angles{h,\two{f}}\qand
  f\rt h = \angles{h,\one{f}}\two{f}
\end{align}
for $h\in\Uqsu$ and $f\in\Sthreeq$ with Sweedler's notation
$\Delta(f)=\one{f}\otimes\two{f}$ (and implicit sum).  The
$\ast$-structure on $\Uqsu$, unconventionally denoted here by $\dag$
(for reasons that will become clear momentarily), is given by
$(K^{\pm 1})^\dag = K^{\pm 1}$ and $E^\dag = F$.  The action of
$\Uqsu$ is compatible with the $\ast$-algebra structures in the
following sense
\begin{align*}
  h\triangleright f^\ast = \paraa{S(h)^\dag\triangleright f}^\ast\qquad
  f^\ast\triangleleft h = \paraa{f\triangleleft S(h)^\dag}^\ast.
\end{align*}

Let us for convenience list the left and right actions of the
generators:
\begin{alignat*}{2}
  &K^{\pm 1}\triangleright a^{n}  = q^{\mp\frac{n}{2}}\,a^{n} &\qquad 
  &K^{\pm 1}\triangleright c^{n}  = q^{\mp\frac{n}{2}}\,c^{n}\\
  &K^{\pm 1}\triangleright a^{\ast}\,^{n} = q^{\pm\frac{n}{2}}(a^{\ast})^{n}&\qquad
  &K^{\pm 1}\triangleright c^{\ast}\,^{n} = q^{\pm\frac{n}{2}}(c^{\ast})^{n}\\
  &E\triangleright a^{n}  = -q^{(3-n)/2} [n]a^{n-1}c^{\ast} &\qquad
  &E\triangleright c^{n}  = q^{(1-n)/2}[n]c^{n-1}a^\ast\\
  &E\triangleright (a^{\ast})^{n}  = 0 &\qquad
  &E\triangleright (c^{\ast})^{n}  = 0.\\
  &F\triangleright a^{n}  = 0 &\qquad
  &F\triangleright c^{n}  = 0\\
  &F\triangleright (a^{\ast})^{n}  = q^{(1-n)/2}[n]c(a^{\ast})^{ n-1} &\qquad 
  &F\triangleright (c^{\ast})^{n}  = -q^{-(1+n)/2}[n]a(c^{\ast})^{n-1}
\end{alignat*}
and 
\begin{alignat*}{2}
  &a^n\ltr K^{\pm 1} = q^{\mp \frac{n}{2}}a^n &\quad
  &(a^\ast)^n\ltr K^{\pm 1} = q^{\pm \frac{n}{2}}(a^\ast)^n\\
  &c^n\ltr K^{\pm 1} = q^{\pm \frac{n}{2}}c^n &\quad
  &(c^\ast)^n\ltr K^{\pm 1} = q^{\mp \frac{n}{2}}(c^\ast)^n\\
  &a^n\ltr F = q^{\frac{n-1}{2}}[n]ca^{n-1} &\quad
  &(a^\ast)^n\ltr F = 0\\
  &c^n\ltr F = 0 &\quad
  &(c^\ast)^n\ltr F = -q^{\frac{n-3}{2}}[n]a^\ast(c^\ast)^{n-1}\\
  &a^n\ltr E = 0 &\quad
  &(a^\ast)^n\ltr E = -q^{\frac{n-3}{2}}[n]c^\ast (a^\ast)^{n-1}\\
  &c^n\ltr E = q^{\frac{n-1}{2}}[n]c^{n-1}a &\qquad
  &(c^\ast)^n\ltr E = 0
\end{alignat*}
where $[n] = (q^n-q^{-n})/(q-q^{-1})$.  

\subsection{$q$-deformed derivations}

The algebra $\Stq$ comes with a standard set of three $q$-deformed
derivations (which can be used to generate a left covariant
differential calculus, see
Section~\ref{sec:left.cov.calculus}). Namely, defining $X_a$ for
$a=1,2,3$ as
\begin{align*}
  X_1 \equiv \Xp = \sqrt{q}EK\qquad
  X_2 \equiv \Xm  = \frac{1}{\sqrt{q}}FK\qquad
  X_3 \equiv \Xz = \frac{1-K^4}{1-q^{-2}}
\end{align*}
it follows that for $f,g\in\Sthreeq$ (where $X_a(f)$ denotes either $X_a\rtr f$ or $f\ltr X_a$)
\begin{align*}
  &\Xp(fg) = f\Xp(g) + \Xp(f)\sigmap(g)\\
  &\Xm(fg) = f\Xm(g) + \Xp(f)\sigmam(g)\\
  &\Xz(fg) = f\Xz(g) + \Xp(f)\sigmaz(g),
\end{align*}
with
\begin{align}\label{sig}
\sigmap=\sigmam=K^2\quad\text{and}\quad \sigmaz=K^4. 
\end{align} 
Furthermore, these maps satisfy the following $q$-deformed commutation relations
\begin{align}
  &\Xm\Xp - q^2\Xp\Xm = \Xz\label{eq:Xmp.com}\\
  q^2&\Xz\Xm-q^{-2}\Xm\Xz=(1+q^2)\Xm\label{eq:Xzm.com}\\
  q^2&\Xp\Xz-q^{-2}\Xz\Xp=(1+q^2)\Xp.\label{eq:Xzp.com}
\end{align}
For an arbitrary map $X:\Stq\to\Stq$ one defines $X^\ast:\Stq\to\Stq$ as
\begin{align*}
  X^\ast(f) = \paraa{X(f^\ast)}^\ast
\end{align*}
and it follows that
\begin{align}\label{X.Xs.relation}
  \Xp^\ast = -K^{-2}\Xm\qquad
  \Xm^\ast = -K^{-2}\Xp\qquad
  \Xz^\ast = -K^{-4}\Xz,
\end{align}
satisfying
\begin{align}
  &\Xp^\ast(fg) = \sigmasp(f)\Xp^\ast(g)+\Xp^\ast(f)g  \label{11} \\
  &\Xm^\ast(fg) = \sigmasm(f)\Xm^\ast(g)+\Xm^\ast(f)g \label{12} \\
  &\Xz^\ast(fg) = \sigmasz(f)\Xz^\ast(g)+\Xz^\ast(f)g  \label{13}
\end{align}
with 
\begin{align}\label{sigstar}
\sigmasp=\sigmap^{-1}=K^{-2}, \qquad \sigmasm=\sigmam^{-1}=K^{-2}, \qquad
\sigmasz=\sigmaz^{-1}=K^{-4}.
\end{align}
We stress that $X^*_a$ is different from $X^\dag_a$, as defined above on $\Uqsu$.

\subsection{A left covariant calculus on $\Stq$}\label{sec:left.cov.calculus}

\noindent
It is well known that there is a left covariant (first order)
differential calculus on $\Sthreeq$, denoted by $\OmegaStq$, generated
as a left $\Stq$-module by
\begin{align*}
  \omega_1=\omegap = a\,dc-qc\,da\qquad
  \omega_2=\omegam = c^\ast da^\ast-qa^\ast dc^\ast\qquad
  \omega_3=\omegaz = a^\ast da + c^\ast dc,
\end{align*}
together with the differential $d:\Sthreeq\to\OmegaStq$
\begin{align}\label{df3}
  df = (\Xp\rtr f)\omegap + (\Xm\rtr f)\omegam + (\Xz\rtr f)\omegaz
\end{align}
for $f\in\Sthreeq$ \cite{w:twisted.su2}. In fact, $\OmegaStq$ is a free left module with a
basis given by $\{\omegap,\omegam,\omegaz\}$. Moreover, $\OmegaStq$ is
a bimodule with respect to the relations
\begin{alignat*}{4}
  &\omega_{z}a = q^{-2}a\omega_{z} &\qquad
  &\omega_{z}a^* = q^{2}a^*\omega_{z} &\qquad 
  &\omega_{z}c = q^{-2}c\omega_{z} &\qquad
  &\omega_{z}c^* = q^{2}c^*\omega_{z} \\
  &\omega_{\pm} a = q^{-1}a\omega_{\pm} &\qquad
  &\omega_{\pm}a^* = q a^*\omega_{\pm}&\qquad
  &\omega_{\pm}c = q^{-1}c\omega_{\pm} &\qquad 
  &\omega_{\pm}c^* = qc^*\omega_{\pm},
\end{alignat*}
and, furthermore, $\OmegaStq$ is a $\ast$-bimodule with
\begin{align*}
  &\omegap^\dag= -\omegam\qquad\omegaz^\dag=-\omegaz
\end{align*}
satisfying $(f\omega g)^\dagger=g^\ast\omega^\dag f^\ast$ for
$f,g\in\Sthreeq$ and $\omega\in\OmegaStq$.

\section{$q$-affine connections}\label{sec:q.affine.connections}

\noindent
In differential geometry, a connection extends the action of
derivatives to vector fields, and for $\Stq$ a natural set of
($q$-deformed) derivations is given by
$\{X_a\}_{a=1}^3=\{\Xp,\Xm,\Xz\}$. In this section, we will introduce
a framework extending the action of $X_a$ to a connection on
$\Stq$-modules. Let us first define the set of derivations we shall be
interested in.

\begin{definition}
  The quantum tangent space of $\Stq$ is defined as
  \begin{align*}
    \TStq = \complex\angles{\Xp,\Xp^\ast,\Xm,\Xm^\ast,\Xz,\Xz^\ast},
  \end{align*}
that is the complex vector space generated by $X_a$ and $X_a^\ast$ for
  $a=1,2,3$.
\end{definition}

\noindent
Considering $\TStq$ to be the analogue of a (complexified) tangent space of $\Stq$,
we would like to introduce a covariant derivative $\nabla_X$ on a (left)
$\Stq$-module $M$, for $X\in\TStq$. Since the basis elements of
$\TStq$ act as $q$-deformed derivations, the connection should obey an
analogous $q$-deformed Leibniz rule.  The motivating example is when
$M=\Stq$ and the action of $\TStq$ is simply $\nabla_{X}f = X(f)$ for
$X\in\TStq$ and $f\in\Stq$. In fact, let us be slightly more general
and consider the action on a free module of rank $n$. Thus, we let $M$
be a free left $\Stq$-module with basis $\{e_i\}_{i=1}^n$, and
  write an arbitrary element $m\in M$ as $m=m^ie_i$ for
  $m^i\in\Stq$, implicitly assuming a summation over $i$ from $1$ to $n$.
Moreover, we assume 
there exist $\complex$-linear maps $\sigmah_a,\sigmahs_a:M\to M$ such that
\begin{align*}
  \sigmah_a(fm)=\sigma_a(f)\sigmah_a(m) \qand 
  \sigmah^\ast_a(fm)=\sigmas_a(f)\sigmah^\ast_a(m)
\end{align*}
for $f\in\Stq$, $m\in M$ and $a=1,2,3$ (for instance, one may choose
$\sigmah_a(m^ie_i)=\sigma_a(m^i)e_i$ and similarly for $\sigmahs_a$).
Let us define $\nzero:\TStq\times M\to M$ by setting
\begin{align}\label{eq:nzero.def}
  \nzero_{X_a}(m) = X_a(m^i)\sigmah_a(e_i) \qand
  \nzero_{X^\ast_a}(m) = X^\ast_a(m^i)e_i
\end{align}
for $m=m^ie_i\in M$ (and extending it linearly to all of $\TStq$).  Now, it is easy to check that
\begin{align}
  &\nzero_{X_a}(fm) = f\nzero_{X_a}m + X_a(f)\sigmah_a(m)\label{eq:nzero.X.leibniz}\\
  &\nzero_{X_a^\ast}(fm) = \sigmas_a(f)\nzero_{X^\ast_a}m + X^\ast_a(f)m \label{eq:nzero.Xs.leibniz}
\end{align}
for $f\in\Stq$ and $m\in M$.  Let us generalize these concepts to
arbitrary $\Stq$-modules supporting an action of $\sigma_a$.
\begin{definition}\label{def:sigma.module}
  Let $M$ be a left $\Stq$-module and let
  $\sigmah_a,\sigmah^\ast_a:M\to M$ be maps such that
  \begin{align*}
    &\sigmah_a(\lambda_1m_1+\lambda_2m_2) =
      \lambda_1\sigmah_a(m_1)+\lambda_2\sigmah_a(m_2)\qquad
    \sigmah_a(fm) = \sigma_a(f)\sigmah_a(m)\\
    &\sigmah^\ast_a(\lambda_1m_1+\lambda_2m_2) =
      \lambda_1\sigmah^\ast_a(m_1)+\lambda_2\sigmah^\ast_a(m_2)\qquad
    \sigmah^\ast_a(fm) = \sigma^\ast_a(f)\sigmah^\ast_a(m)
  \end{align*}
  for $\lambda_1,\lambda_2\in\complex$, $f\in\Stq$, 
  $m_1,m_2,m\in M$ and $a=1,2,3$. Then $(M,\sigmah_a,\sigmah^\ast_a)$ is called a
  \emph{$\sigma$-module}. Moreover, given the $\sigma$-modules
  $(M,\sigmah_a,\sigmahs_a)$ and $(\tilde{M},\tilde{\sigma}_a,\tilde{\sigma}_a^\ast)$, a left
  module homomorphism $\phi:M\to \tilde{M}$ is called a \emph{$\sigma$-module
    homomorphism} if
  \begin{align*}
    \phi\paraa{\sigmah_a(m)} = \tilde{\sigma}_a\paraa{\phi(m)} z \qand 
    \phi\paraa{\sigmahs_a(m)} = \tilde{\sigma}_a^\ast\paraa{\phi(m)}
  \end{align*}
  for $m\in M$ and $a=1,2,3$.
  
\end{definition}

\noindent
For notational convenience, when the maps $\sigmah_a,\sigmahs_a$ are
clear from the context, we shall simply write $M$ for the
$\sigma$-module $(M,\sigmah,\sigmahs)$. Next, motivated by
\eqref{eq:nzero.X.leibniz} and \eqref{eq:nzero.Xs.leibniz}, we
introduce connections on $\sigma$-modules.

\begin{definition}\label{def:q.affine.connection}
  Let $M$ be a left $\sigma$-module. A \emph{left $q$-affine connection on
    $M$} is a map $\nabla:\TStq\times M\to M $ such that
  \begin{enumerate}
  \item $\nabla_{X}(\lambda_1m_1+\lambda_2m_2) = \lambda_1\nabla_Xm_1 + \lambda_2\nabla_Xm_2$,\label{q.affine.lin.module}
  \item $\nabla_{\lambda_1 X+\lambda_2Y}m = \lambda_1\nabla_Xm + \lambda_2\nabla_Ym$,\label{q.affine.lin.tangent}
  \item $\nabla_{X_a}(fm) = f\nabla_{X_a}m+X_a(f)\sigmah_a(m)$,\label{q.affine.Xa.leibniz}
  \item $\nabla_{X^\ast_a}(fm) = \sigmas_a(f)\nabla_{X^\ast_a}m+X^\ast_a(f)m$,\label{q.affine.Xas.leibniz}
  \end{enumerate}
  for $a=1,2,3$, $m,m_1,m_2\in M$, $f\in\Stq$, $X\in\TStq$ and
  $\lambda_1,\lambda_2\in\complex$.
\end{definition}

\begin{remark}
  Note that given $\nabla_{X_a}$ for $a=1,2,3$, one can set
  \begin{align*}
    \nabla_{\Xp^\ast}=-\sigmahsm\circ\nabla_{\Xm}\qquad
    \nabla_{\Xm^\ast}=-\sigmahsp\circ\nabla_{\Xp}\qquad
    \nabla_{\Xz^\ast}=-\sigmahsz\circ\nabla_{\Xz}
  \end{align*}
  satisfying (\ref{q.affine.Xas.leibniz}) in
    Definition~\ref{def:q.affine.connection}, due to \eqref{X.Xs.relation} and \eqref{sigstar}.
\end{remark}

\noindent
Next, assume that the module $M$ 
comes with a hermitian form $h:M\times M\to\Stq$ satisfying
\begin{align*}
  &h(fm_1,m_2)=fh(m_1,m_2)\qquad  h(m_1,m_2)^\ast = h(m_2,m_1)\\
  &h(m_1+m_2,m_3) = h(m_1,m_3)+h(m_2,m_3)
\end{align*}
for $f\in\Stq$ and $m_1,m_2,m_3\in M$. On a free module with basis
$\{e_i\}_{i=1}^n$, a hermitian form is given by $h_{ij}=h_{ji}^\ast\in \Stq$
by setting
\begin{align}\label{eq:canon.metric.free}
  h(m_1,m_2) = m_1^ih_{ij}(m_2^j)^\ast
\end{align}
for $m_1=m_1^ie_i\in(\Stq)^n$ and $m_2=m_2^ie_i\in(\Stq)^n$. In the
case of the $q$-affine connection $\nzero$ in \eqref{eq:nzero.def}, one finds that
\begin{align*}
  \Xp\paraa{&h(m_1,m_2)}
              = \Xp\paraa{m_1^ih_{ij}(m_2^j)^\ast}
              = m_1^i\Xp\paraa{h_{ij}(m_2^j)^\ast} +
              \Xp(m_1^i)\sigmap\paraa{h_{ij}(m_2^j)^\ast}\\
            &=m_1^ih_{ij}\Xp\paraa{(m_2^j)^\ast}+
              m_1^i\Xp(h_{ij})\sigmap\paraa{(m_2^j)^\ast}
              +\Xp(m_1^i)\sigmap\paraa{h_{ij}(m_2^j)^\ast},
\end{align*}
and assuming that $\Xp(h_{ij})=0$ one obtains
\begin{align*}
  \Xp\paraa{h(m_1,m_2)}
  &= m_1^ih_{ij}\paraa{\Xp^\ast(m_2^j)}^\ast
    + \sigmap\paraa{(\sigmap^{-1}\circ\Xp)(m_1^i)h_{ij}(m_2^j)^\ast}\\
  &= h\paraa{m_1,\nzero_{\Xp^\ast}(m_2)}
    -\sigmap\paraa{h(\nzero_{\Xm^\ast} m_1,m_2)},
\end{align*}
by using that $\Xm^\ast=-\sigmap^{-1}\circ\Xp$.  Corresponding
formulas are easily worked out for $\nzero_{\Xm},\nzero_{\Xz}$, and we
shall take this as a motivation for the following definition.
\begin{definition}\label{def:metric.compatibility}
  A $q$-affine connection $\nabla$ on a left $\sigma$-module $M$ is
  compatible with the hermitian form $h:M\times M\to\Stq$ if
  \begin{align}
    &\Xp\paraa{h(m_1,m_2)}
      = -\sigmap\paraa{h(\nabla_{\Xm^\ast} m_1,m_2)}
      +h\paraa{m_1,\nabla_{\Xp^\ast} m_2}\label{eq:metric.comp.p}\\
    &\Xm\paraa{h(m_1,m_2)}
      = -\sigmam\paraa{h(\nabla_{\Xp^\ast} m_1,m_2)}
      +h\paraa{m_1,\nabla_{\Xm^\ast} m_2}\label{eq:metric.comp.m}\\
    &\Xz\paraa{h(m_1,m_2)}
      =-\sigmaz\paraa{h(\nabla_{\Xz^\ast} m_1,m_2)}
      +h\paraa{m_1,\nabla_{\Xz^\ast} m_2},\label{eq:metric.comp.z}
  \end{align}
  for $m_1,m_2\in M$.
\end{definition}

\noindent
Note that \eqref{eq:metric.comp.p} and \eqref{eq:metric.comp.m} are equivalent since
\begin{align*}
  \parab{
  \Xp&\paraa{h(m_2,m_1)}
      + \sigmap\paraa{h(\nabla_{\Xm^\ast} m_2,m_1)}
      -h\paraa{m_2,\nabla_{\Xp^\ast} m_1}
  }^\ast\\
     &=-K^{-2}\parab{
       \Xm\paraa{h(m_1,m_2)}
      + \sigmam\paraa{h(\nabla_{\Xp^\ast} m_1,m_2)}
      -h\paraa{m_1,\nabla_{\Xm^\ast} m_2}
       }.
\end{align*}
In the case of a $q$-affine connection on a free module, one can
derive a convenient parametrization of all connections that are
compatible with a given hermitian form. To this end, let us introduce
some notation. Let $(\Stq)^n$ be a free $\sigma$-module with basis
$\{e_i\}_{i=1}^n$.  A $q$-affine connection $\nabla$ on $(\Stq)^n$ can
be determined by specifying the Christoffel symbols
\begin{align*}
  \nabla_{X_a}e_i = \Gamma_{ai}^je_j,
\end{align*}
with $\Gamma_{ai}^j\in\Stq$ for $a=1,2,3$ and $i,j=1,\ldots,n$, and
setting
\begin{align*}
  &\nabla_{\Xp^\ast}e_i = -\sigmah_-^\ast\paraa{\nabla_{\Xm}e_i}\qquad
  \nabla_{\Xm^\ast}e_i = -\sigmah_+^\ast\paraa{\nabla_{\Xp}e_i}\qquad
  \nabla_{\Xz^\ast}e_i = -\sigmah_z^\ast\paraa{\nabla_{\Xz}e_i}\\
  &\nabla_{X_a}(m^ie_i) = m^i\nabla_{X_a}e_i+X_a(m^i)\sigmah_a(e_i)\qquad
  \nabla_{X^\ast_a}(m^ie_i) = \sigma_a^\ast(m^i)\nabla_{X_a^\ast}e_i+X^\ast_a(m^i)e_i.
\end{align*}

\noindent 
As we shall see, the metric compatibility of
$\nabla$ is conveniently formulated in terms of
\begin{align}
  \Gammat_{ai,j} = \Gamma_{ai}^k\sigma_a(\htilde_{akj})\qquad 
  \htilde_{aij}=h\paraa{\sigmahs_a(e_i),e_j}.\label{eq:Gammat.def}
\end{align}
The hermitian form $h$ is assumed to be invertible (i.e. inducing an
isomorphism of the module and its dual) which implies that there
exists $h^{ij}$ such that
$h_{ij}h^{jk}=h^{kj}h_{ji}=\delta_i^k\mid$. In this case, one finds
that $\htilde_{aij}$ (for $a=1,2,3$) is invertible as well,
implying that one may invert \eqref{eq:Gammat.def} as
\begin{align*}
  \Gamma_{aj}^i=\Gammat_{aj,k}\sigma_a\paraa{\htilde_a^{ki}}.
\end{align*}

\begin{proposition}\label{prop:nabla.comp.metric}
  Let $(\Stq)^n$ be a free $\sigma$-module with a basis
  $\{e_i\}_{i=1}^n$ and let $\nabla$ be a $q$-affine connection on
  $(\Stq)^n$ given by the Christoffel symbols
  $\nabla_ae_i = \Gamma_{ai}^je_j$. Furthermore, assume that $h$ is
  an invertible hermitian form on $(\Stq)^n$. Then $\nabla$ is
  compatible with $h$ if and only if there exist hermitian matrices
  $\alpha,\beta,\rho\in\Mat_n(\Stq)$ such that
  \begin{align}
    &\Gammat_{+i,j} = \tfrac{1}{2}\Xp(h_{ij})+K(\alpha_{ij})+iK(\beta_{ij})\\
    &\Gammat_{-i,j} = \tfrac{1}{2}\Xm(h_{ij})+K(\alpha_{ij})-iK(\beta_{ij})\\
    &\Gammat_{zi,j} = \tfrac{1}{2}\Xz(h_{ij})+K^2(\rho_{ij}),
  \end{align}
  with $h_{ij}=h(e_i,e_j)$.
\end{proposition}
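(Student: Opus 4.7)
The plan is to rewrite the compatibility conditions \eqref{eq:metric.comp.p}--\eqref{eq:metric.comp.z} as linear equations in the rescaled symbols $\Gammat$, and then parametrise their solution space. First, I apply \eqref{eq:metric.comp.p} to basis elements $m_1=e_i$, $m_2=e_j$ and expand using $\nabla_{\Xm^\ast}e_i=-\sigmahsp(\Gamma_{+i}^ke_k)$ and $\nabla_{\Xp^\ast}e_j=-\sigmahsm(\Gamma_{-j}^ke_k)$. Using the twist properties of $\sigmahsp,\sigmahsm$, the hermitian-form identity $h(m,fn)=h(m,n)f^\ast$, and the relation $\sigmap\circ\sigmasp=\mathrm{id}$, the first term on the right of \eqref{eq:metric.comp.p} collapses to $\Gammat_{+i,j}$, while the second yields $-\htilde_{-ki}^\ast\sigmam\paraa{(\Gamma_{-j}^k)^\ast}$. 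The algebra-morphism property of $\sigmam=K^2$ rewrites this as $-\sigmam\paraa{\Gammat_{-j,i}^\ast}$, so compatibility with $\Xp$ is equivalent to
\begin{align*}
\Gammat_{+i,j}-\sigmap\paraa{\Gammat_{-j,i}^\ast}=\Xp(h_{ij}).
\end{align*}
The analogous calculation reduces \eqref{eq:metric.comp.z} to
\begin{align*}
\Gammat_{zi,j}-\sigmaz\paraa{\Gammat_{zj,i}^\ast}=\Xz(h_{ij}),
\end{align*}
while \eqref{eq:metric.comp.m} is equivalent to \eqref{eq:metric.comp.p}, as already observed.

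Next I solve these two equations. Writing $\Gammat_{+i,j}=\thalf\Xp(h_{ij})+A_{ij}$ and $\Gammat_{-i,j}=\thalf\Xm(h_{ij})+B_{ij}$, the $\ast$-identity $\Xm(h_{ji})=-\sigmam\paraa{\Xp(h_{ij})^\ast}$ (a direct consequence of \eqref{X.Xs.relation}) together with $(K^nf)^\ast=K^{-n}(f^\ast)$ reduces the first equation to the clean relation $A_{ij}=K^2(B_{ji}^\ast)$, so $B$ is determined by $A$. Any $A\in\Mat_n(\Stq)$ admits a decomposition $A_{ij}=K(\alpha_{ij})+iK(\beta_{ij})$ with hermitian $\alpha,\beta$, via the Cartesian decomposition of $K^{-1}A$. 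Substituting into $B_{ij}=K^2(A_{ji}^\ast)$ and using $(Kf)^\ast=K^{-1}(f^\ast)$ together with the hermiticity of $\alpha,\beta$ yields exactly $B_{ij}=K(\alpha_{ij})-iK(\beta_{ij})$. The analogous substitution $\Gammat_{zi,j}=\thalf\Xz(h_{ij})+C_{ij}$ in the $\Xz$-equation gives $C_{ij}=K^4(C_{ji}^\ast)$, and writing $C_{ij}=K^2(\rho_{ij})$ shows this is equivalent to hermiticity of $\rho$. Invertibility of $h$ makes \eqref{eq:Gammat.def} bijective between $\Gamma$ and $\Gammat$, so the parameterisation of $\Gammat$ transfers to one of the Christoffel symbols.

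The main obstacle is the opening reformulation: carefully tracking the twists $\sigmap,\sigmam,\sigmasp,\sigmasm$ together with the $\ast$-involution on $\Stq$ (in particular the identity $(Kf)^\ast=K^{-1}(f^\ast)$ coming from the compatibility of the $\Uqsu$-action with the $\ast$-structures), and recognising the combination $\htilde_{-ki}^\ast\sigmam\paraa{(\Gamma_{-j}^k)^\ast}$ as $\sigmam\paraa{\Gammat_{-j,i}^\ast}$. Once the compatibility conditions are rephrased in this clean form, the rest is a routine solution of a twisted-hermitian linear system by Cartesian decomposition.
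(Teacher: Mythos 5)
Your proposal is correct and follows essentially the same route as the paper: both reduce metric compatibility on basis elements to the twisted equations $\Gammat_{+i,j}-K^2\paraa{(\Gammat_{-j,i})^\ast}=\Xp(h_{ij})$ and $\Gammat_{zi,j}-K^4\paraa{(\Gammat_{zj,i})^\ast}=\Xz(h_{ij})$, and then parametrize all solutions via a $K$-twisted Cartesian (hermitian/anti-hermitian) decomposition, using the same key reality identities for $\Xpm(h_{ij})$ and $\Xz(h_{ij})$. The only differences are organizational: you split off the particular solution $\thalf X_a(h_{ij})$ and solve the resulting homogeneous twisted-hermitian system, whereas the paper parametrizes $\Gammat_{-i,j}$ (resp.\ writes $\Gammat_{zi,j}=K^2(\rho_{ij}+ig_{ij})$) directly and verifies the form of the remaining symbols.
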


\begin{proof}
  Starting from $\nabla_{X_a}e_i=\Gamma_{ai}^je_j$ one obtains
  \begin{align*}
    -\sigmap \paraa{h(\nabla_{\Xm^\ast}e_i,e_j)}+h(e_i,\nabla_{\Xp^\ast}e_j)
            &=\sigmap\paraa{h(\sigmahsp(\Gamma_{+i}^ke_k),e_j)}
              -h\paraa{e_i,\sigmahsm(\Gamma_{-j}^ke_k)}\\
            &=\Gamma^k_{+i}\sigmap\paraa{h(\sigmahsp(e_k),e_j)}
              -h(e_i,\sigmahsm(e_k))\sigmasm(\Gamma_{-j}^k)^\ast\\
            &=\Gammat_{+i,j}-\sigmasm\paraa{\Gammat_{-j,i}}^\ast
  \end{align*}
  giving the metric compatibility equation \eqref{eq:metric.comp.p} as
  \begin{align}\label{eq:metric.p.gammat}
    \Gammat_{+i,j} = \Xp\paraa{h_{ij}} + K^{-2}\paraa{\Gammat_{-j,i}}^\ast.
  \end{align}
  Similarly, equation \eqref{eq:metric.comp.z} may be written as
  \begin{align}\label{eq:metric.z.gammah}
    \Gammat_{zi,j} = \Xz\paraa{h_{ij}} + K^{-4}\paraa{\Gammat_{zj,i}}^\ast.
  \end{align}
  To solve \eqref{eq:metric.p.gammat} one may freely choose
  $\Gammat_{-i,j}$ and define $\Gammat_{+i,j}$ accordingly. Without
  loss of generality, let us write $\Gammat_{-i,j}$ in the following form
  \begin{align*}
    \Gammat_{-i,j} = \tfrac{1}{2}\Xm(h_{ij}) + K(\alpha_{ij}) - iK(\beta_{ij})
  \end{align*}
  for arbitrary $\alpha_{ij}=\alpha_{ji}^\ast$ and
  $\beta_{ij}=\beta_{ji}^\ast$. Then one obtains
  \begin{align*}
    \Gammat_{+i,j} &= \Xp\paraa{h_{ij}} + K^{-2}\paraa{\Gammat_{-j,i}}^\ast\\
                   &= \Xp\paraa{h_{ij}} + \tfrac{1}{2}K^2\paraa{\Xm(h_{ji})^\ast}
                     + K^2\paraa{K(\alpha_{ji})^\ast}+iK^2\paraa{K(\beta_{ji})^\ast}\\
                   &=\Xp\paraa{h_{ij}} - \tfrac{1}{2}K^2\paraa{K^{-2}\Xp(h_{ij})}
                     +K^2\paraa{K^{-1}(\alpha_{ij})}+iK^2\paraa{K^{-1}(\beta_{ij})}\\
                   &=\tfrac{1}{2}\Xp(h_{ij}) + K(\alpha_{ij})+iK(\beta_{ij})
  \end{align*}
  Hence, every solution of \eqref{eq:metric.comp.p} may be written in
  the above form.  For the metric equation \eqref{eq:metric.comp.z}
  one writes $\Gammat_{zi,j} = K^2(\rho_{ij}+ig_{ij})$, with
  $\rho_{ij}=\rho_{ji}^\ast$ and $g_{ij}=g_{ji}^\ast$, and notes that
  \eqref{eq:metric.z.gammah} is equivalent to
  \begin{align*}
    K^2(\rho_{ij}+ig_{ij}) = \Xz(h_{ij})+K^2(\rho_{ij}-ig_{ij})\equivalent
    iK^2(g_{ij}) = \tfrac{1}{2}\Xz(h_{ij}).
  \end{align*}
  This is compatible with the requirement that $g_{ij}=g_{ji}^\ast$ since
  \begin{align*}
    \paraa{K^{-2}\Xz(h_{ji})}^\ast = K^2\paraa{\Xz(h_{ji})^\ast}=-K^2\paraa{K^{-4}\Xz(h_{ji})}
    =-K^{-2}\Xz(h_{ij}).
  \end{align*}
  Hence, the general solution to \eqref{eq:metric.z.gammah} can be written as
  \begin{align*}
    \Gammat_{zi,j} = \tfrac{1}{2}\Xz(h_{ij}) + K^2(\rho_{ij})
  \end{align*}
  for arbitrary $\rho_{ij}=\rho_{ji}^\ast$, which concludes the proof.
\end{proof}

\subsection{$q$-affine connections on projective modules}

\noindent
As expected, $q$-affine connections exist on projective
modules. More precisely, one proves the following result.

\begin{proposition}\label{prop:q.affine.projective}
  Let $M=\paraa{(\Stq)^n,\sigmah^0_a,(\sigma_a^0)^\ast}$ be a free
  $\sigma$-module and let $\nzero$ be a $q$-affine connection on
  $M$. If $p:(\Stq)^n\to(\Stq)^n$ is a projection  
  and $\sigmah_a=p\circ\sigmah^0_a$, 
  $\sigmahs_a=p\circ(\sigmah_a^0)^\ast$, then
  \begin{align*}
    (p(\Stq)^n,\sigmah_a,\sigmahs_a),
  \end{align*}
  is a $\sigma$-module and
  $p\circ\nzero$ is a $q$-affine connection on $(p(\Stq)^n,\sigmah_a,\sigmahs_a)$.
\end{proposition}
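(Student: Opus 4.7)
The plan is to verify in turn the two claims: first that $(p(\Stq)^n,\sigmah_a,\sigmahs_a)$ is a $\sigma$-module in the sense of Definition~\ref{def:sigma.module}, and second that $p\circ\nzero$ satisfies the four axioms of Definition~\ref{def:q.affine.connection}. Throughout the argument I would exploit two elementary properties of a projection on a free left $\Stq$-module: it is a left $\Stq$-module homomorphism, so it commutes with left multiplication by elements of $\Stq$; and it is idempotent, so $p(m)=m$ for every $m\in p(\Stq)^n$.

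For the $\sigma$-module structure, $\sigmah_a=p\circ\sigmah^0_a$ and $\sigmahs_a=p\circ(\sigmah^0_a)^\ast$ land in $p(\Stq)^n$ by construction, and $\complex$-linearity is immediate. For the twisted module identity, I would apply $p$ to the identity $\sigmah^0_a(fm)=\sigma_a(f)\,\sigmah^0_a(m)$ already satisfied on the free module and pull $\sigma_a(f)$ through $p$ using its $\Stq$-linearity, obtaining $\sigmah_a(fm)=\sigma_a(f)\,\sigmah_a(m)$; the argument for $\sigmahs_a$ is identical.

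For the connection, $\complex$-bilinearity is inherited from $\nzero$ and $p$, while the two Leibniz rules (\ref{q.affine.Xa.leibniz}) and (\ref{q.affine.Xas.leibniz}) follow by applying $p$ to the corresponding identities \eqref{eq:nzero.X.leibniz} and \eqref{eq:nzero.Xs.leibniz} and pulling $f$ (respectively $\sigma_a^\ast(f)$) through $p$ by $\Stq$-linearity. The one subtlety worth flagging is in the $X_a^\ast$-rule: applying $p$ to \eqref{eq:nzero.Xs.leibniz} produces the term $X_a^\ast(f)\,p(m)$, which must be rewritten as $X_a^\ast(f)\,m$, and this is exactly where idempotence of $p$ together with $m\in p(\Stq)^n$ enters. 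The analogous term in the $X_a$-rule, namely $X_a(f)\,p(\sigmah^0_a(m))$, already equals $X_a(f)\,\sigmah_a(m)$ by definition, so no such step is needed there.

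I do not foresee a real obstacle: this is essentially the standard observation that projectors produce connections on projective modules, adapted to the twisted Leibniz setting. The only thing to keep track of is that $p$ appears on the correct side of each twisting map, which is arranged by the very definitions $\sigmah_a=p\circ\sigmah^0_a$ and $\sigmahs_a=p\circ(\sigmah^0_a)^\ast$, matched against the asymmetry built into Definition~\ref{def:q.affine.connection} between conditions (\ref{q.affine.Xa.leibniz}) and (\ref{q.affine.Xas.leibniz}).
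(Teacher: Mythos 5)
Your proposal is correct and follows essentially the same route as the paper: apply $p$ to the twisted Leibniz identities on the free module, pull scalars through $p$ by left $\Stq$-linearity, and use $p(m)=m$ for $m\in p(\Stq)^n$ in the $X_a^\ast$-rule. The subtlety you flag (idempotence entering only in the $X_a^\ast$-term $X_a^\ast(f)\,p(m)$, while the $X_a$-term $X_a(f)\,p(\sigmah^0_a(m))$ is handled by the definition of $\sigmah_a$) is exactly how the paper's computation proceeds.
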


\begin{proof}
  It follows immediately that
  $(p(\Stq)^n,p\circ\sigmah^0_a,p\circ(\sigmah_a^0)^\ast)$ satisfy the
  requirements of Definition~\ref{def:sigma.module}, since
  $\paraa{(\Stq)^n,\sigmah^0_a,(\sigma_a^0)^\ast}$ is a
  $\sigma$-module. For instance,
  \begin{align*}
    \sigmah_a(fm) = p\paraa{\sigmah_a^0(fm)}
    = p\paraa{\sigma_a(f)\sigmah_a^0(m)}
    =\sigma_a(f)p\paraa{\sigmah_a^0(m)}
    =\sigma_a(f)\sigmah_a(m).
  \end{align*}
  Since $\nzero$ is a $q$-affine connection, it is immediate that
  $\nabla=p\circ\nzero$ satisfies properties
  (\ref{q.affine.lin.module}) and (\ref{q.affine.lin.tangent}) in
  Definition~\ref{def:q.affine.connection}. Moreover, for $m\in p(\Stq)^n$
  \begin{align*}
    \nabla_{X_a}(fm)
    &=p\nzero_{X_a}(fm) = fp\paraa{\nzero_{X_a}m} + X_a(f)(p\circ\sigmah^0_a)(m)\\
    &=f\nabla_{X_a}m + X_a(f)\sigmah_a(m)\\
    \nabla_{X^\ast_a}(fm)
    &=\sigma_a^\ast(f)p\paraa{\nzero_{X^\ast_a}m} + X^\ast_a(f)p(m)\\
    &=\sigma_a^\ast(f)\nabla_{X^\ast_a}m + X_a^\ast(f)m,
  \end{align*}
  from which we conclude that $\nabla$ is a $q$-affine connection on
  $p(\Stq)^n$.
\end{proof}

\noindent
Since we have shown in the previous section that one can construct
$q$-affine connections on free modules,
Proposition~\ref{prop:q.affine.projective} shows that $q$-affine
connections exist on projective modules.  Moreover, Let $\nabla$ and $\nablat$
be $q$-affine connections on a $\sigma$-module $M$ and define
\begin{align*}
  \alpha(X,m) = \nabla_Xm-\nablat_Xm.
\end{align*}
Then $\alpha:\TStq\times M\to M$ satisfies
\begin{align}
  &\alpha(\lambda X+Y,m_1) = \lambda\alpha(X,m_1)+\alpha(Y,m_1)\label{eq:alpha.prop.1}\\
  &\alpha(X,fm_1+m_2) = f\alpha(X,m_1)+\alpha(X,m_2)\label{eq:alpha.prop.2}
\end{align}
for $m_1,m_2\in M$, $X\in\TStq$, $f\in\Stq$ and
$\lambda\in\complex$. Conversely, every $q$-affine connection on a
projective module $M$ can be written as
\begin{align*}
  \nabla_Xm = p(\nzero_X m) + \alpha(X,m).
\end{align*}
where $\nzero$ is the connection defined in \eqref{eq:nzero.def} and
$\alpha:\TStq\times M\to M$ is an arbitrary map satisfying
\eqref{eq:alpha.prop.1} and \eqref{eq:alpha.prop.2}.
Next, let us show that a connection on a projective module is
  compatible with the restricted metric if the projection is
  orthogonal.

\begin{proposition}
  Let $\nabla$ be a $q$-affine connection on the free $\sigma$-module
  $(\Stq)^n$ and assume furthermore that $\nabla$ is compatible with a
  hermitian form $h$ on $(\Stq)^n$. If $p:(\Stq)^n\to(\Stq)^n$ is an
  orthogonal projection, i.e.
  \begin{align*}
    h\paraa{p(m_1),m_2} = h\paraa{m_1,p(m_2)}
  \end{align*}
  for all $m_1,m_2\in(\Stq)^n$, then $\nablat=p\circ\nabla$ is compatible with
  $h$ restricted to $p(\Stq)^n$.
\end{proposition}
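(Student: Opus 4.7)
The plan is to reduce the three metric compatibility equations for $\nablat$ on $p(\Stq)^n$ directly to the corresponding equations for $\nabla$ on $(\Stq)^n$ by using the orthogonality of $p$ to move the projection from one slot of $h$ to the other.

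First I would note that, by exactly the argument given in the proof of Proposition~\ref{prop:q.affine.projective} applied to $\nabla$ in place of $\nzero$, the map $\nablat = p\circ\nabla$ is a well-defined $q$-affine connection on the $\sigma$-module $(p(\Stq)^n,\sigmah_a,\sigmahs_a)$ with $\sigmah_a = p\circ\sigmah^0_a$ and $\sigmahs_a = p\circ(\sigmah^0_a)^\ast$. So the statement ``$\nablat$ is compatible with $h|_{p(\Stq)^n}$'' makes sense.

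Next, fix $m_1,m_2\in p(\Stq)^n$, so that $p(m_1)=m_1$ and $p(m_2)=m_2$. For any $X\in\TStq$, the orthogonality of $p$ together with these identities gives
\begin{align*}
  h\paraa{\nablat_X m_1,m_2}
  &= h\paraa{p(\nabla_X m_1),m_2} = h\paraa{\nabla_X m_1, p(m_2)} = h\paraa{\nabla_X m_1, m_2}, \\
  h\paraa{m_1,\nablat_X m_2}
  &= h\paraa{m_1, p(\nabla_X m_2)} = h\paraa{p(m_1),\nabla_X m_2} = h\paraa{m_1,\nabla_X m_2}.
\end{align*}
In other words, inside the hermitian form $h$ the connection $\nablat$ may be freely replaced by $\nabla$ whenever both arguments lie in $p(\Stq)^n$.

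Substituting these identities into the three compatibility conditions \eqref{eq:metric.comp.p}, \eqref{eq:metric.comp.m}, \eqref{eq:metric.comp.z} for $\nablat$ turns each of them into the same equation for $\nabla$ evaluated at $m_1,m_2$, which holds by hypothesis. Hence $\nablat$ is compatible with the restriction of $h$ to $p(\Stq)^n$. There is no real obstacle here; the only thing to verify carefully is that the $\sigmah_a$ and $\sigmahs_a$ used to state compatibility on $p(\Stq)^n$ are indeed the induced ones $p\circ\sigmah_a^0$ and $p\circ(\sigmah_a^0)^\ast$, which is exactly what the previous proposition sets up.
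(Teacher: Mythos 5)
Your proof is correct and follows essentially the same route as the paper: use the orthogonality of $p$ to transfer the projection onto the other argument of $h$, then absorb it using $p(m_i)=m_i$ for $m_i\in p(\Stq)^n$, reducing each compatibility condition for $\nablat$ to the corresponding one for $\nabla$. The only difference is presentational—you state the substitution identity once for all $X\in\TStq$ and both slots, whereas the paper verifies condition \eqref{eq:metric.comp.p} explicitly and declares the others analogous.
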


\begin{proof}
  Let us explicitly check one of the conditions in
  Definition~\ref{def:metric.compatibility} for $m_1,m_2\in p(\Stq)^n$:
  \begin{align*}
    -\sigma_+\paraa{h(&\nablat_{\Xm^\ast}m_1,m_2)}+h\paraa{m_1,\nablat_{\Xp^\ast}m_2}
    =-\sigma_+\paraa{h(p\nabla_{\Xm^\ast}m_1,m_2)}+h\paraa{m_1,p\nabla_{\Xp^\ast}m_2}\\
    &= -\sigma_+\paraa{h(\nabla_{\Xm^\ast}m_1,p(m_2))}+h\paraa{p(m_1),\nabla_{\Xp^\ast}m_2}\\
    &= -\sigma_+\paraa{h(\nabla_{\Xm^\ast}m_1,m_2)}+h\paraa{m_1,\nabla_{\Xp^\ast}m_2}=\Xp\paraa{h(m_1,m_2)}.
  \end{align*}
  The remaining conditions are checked in an analogous way.
\end{proof}

\section{A $q$-affine Levi-Civita connection on $\OmegaStq$} \label{se:4}

\noindent
In this section we shall construct a $q$-affine connection on the free
left module $\OmegaStq$, compatible with a hermitian form $h$, 
satisfying a certain torsion freeness condition. The module
$\OmegaStq$ is a free $\Stq$-module of rank 3 with basis
$\omegap,\omegam,\omegaz$ which implies that the results of
Proposition~\ref{prop:nabla.comp.metric} may be used.  To start with,
one has to endow $\OmegaStq$ with the structure of a $\sigma$-module.
Firstly, the actions \eqref{actions} are extended to forms by requiring they commute 
with the differential $d$. Then, one checks directly that    
$K \rtr \omegap = q^{-1}\omegap$, $K \rtr \omegam = q \omegam$, 
$K \rtr \omegaz = \omegaz$, while 
$K \ltr \omega_a = \omega_a$, for $a=1,2,3$. 
Let us work with the right action (without indicating it explicitly). 
The $\sigma$-module structure is then introduced as
\begin{align*}
  K(\omega_a)=\omega_a\implies
  \sigmah_a(m^b\omega_b) = \sigma_a(m^b)\omega_b\qquad
  \sigmahs_a(m^b\omega_b) = \sigmas_a(m^b)\omega_b  .
\end{align*}
Furthermore, we assume that $h$ is an invertible hermitian form on $\OmegaStq$, such that 
$K(h_{ab})=h_{ab}$, with $h_{ab} = h(\omega_a,\omega_b)$, for $a,b=1,2,3$. 
(Note that for such a metric one has
$X_z(h_{ab})=0$.)  With these choices, one finds that
$\Gammat_{ab,c}= \Gamma_{ab}^ph_{pc}$ where the Christoffel
  symbols are defined as
  $\nabla_{X_a}\omega_b = \Gamma_{ab}^c\omega_c$ for $a,b=1,2,3$.
In the case of a $q$-affine connection on $\OmegaStq$, there is a
natural definition of torsion, motivated by
\eqref{eq:Xmp.com}--\eqref{eq:Xzp.com}.

\begin{definition}
  A $q$-affine connection $\nabla$ on $\OmegaStq$ is \emph{torsion free} if
  \begin{align}
    &\nablam\omegap-q^2\nablap\omegam=\omegaz\\
    q^2&\nablaz\omegam-q^{-2}\nablam\omegaz=(1+q^2)\omegam\\
    q^2&\nablap\omegaz-q^{-2}\nablaz\omegap=(1+q^2)\omegap.
  \end{align}
\end{definition}

\noindent
In terms of $\Gammat_{ab,c}$, the conditions
for a torsion free connection may be reformulated as
\begin{align}
  &\Gammat_{-+,a}-q^2\Gammat_{+-,a}=K^2\paraa{\htilde_{+za}}\\
  q^2&\Gammat_{z-,a}-q^{-2}\Gammat_{-z,b}\sigmam(\htilde_{-}^{bc})\sigmaz(\htilde_{zca})
    =(1+q^2)K^4(\htilde_{z-a})\\
  q^{-2}&\Gammat_{z+,a}-q^{2}\Gammat_{+z,b}\sigmap(\htilde_{+}^{bc})\sigmaz(\htilde_{zca})
    =-(1+q^2)K^4(\htilde_{z+a})
\end{align}
for $a=1,2,3$ and an arbitrary $\sigma$-module structure on $\OmegaStq$. 
For the particular case when the metric is invariant by $K$, that is $K(h_{ab})=h_{ab}$, and
$K(\omega_a)=\omega_a$ for $a,b=1,2,3$, the torsion free equations become
\begin{align}
  &\Gammat_{-+,a}-q^2\Gammat_{+-,a} = h_{za}  \label{ga1} \\
  q^2&\Gammat_{z-,a}-q^{-2}\Gammat_{-z,a} = (1+q^2)h_{-a} \label{ga2}  \\
  q^{2}&\Gammat_{+z,a}-q^{-2}\Gammat_{z+,a} = (1+q^2)h_{+a}. \label{ga3} 
\end{align}
Out of these, in Section~\ref{sec:appendix}, we derive an explicit
expression for a torsion free and metric connection on $\OmegaStq$,
and show that such a connection exists if the following reality condition is satisfied
(cf. eq. \eqref{eq:nec.lc.metric.cond.real}):
\begin{align}\label{eq:cond.exist.connection}
  \paraa{\Xp(h_{-z}-q^{-4}h_{z+})}^\ast = \Xp(h_{-z}-q^{-4}h_{z+}).
\end{align}
The connection is not unique, and the solution depends on 6 parameters.
For the particular case when $h_{ab}=h\delta_{ab}$ (obviously
satisfying \eqref{eq:cond.exist.connection}), one can set all parameters to be zero:
$ \tau_1 = \tau_4 = \gamma_{+-} = f_0 = \mu_2 =\rho_{zz} = 0$, in the notation of
Section~\ref{sec:appendix}. It follows that a
torsion free and metric connection is given by
\begin{align*}
  &\nablap\omegap = \Xp(h)h^{-1}\omegap\\
  &\nablap\omegam = -\thalf q^{-2}\omegaz\\
  &\nablap\omegaz = \thalf\omegap +\thalf\Xp(h)h^{-1}\omegaz\\
  &\nablam\omegap  = \thalf\omegaz\\
  &\nablam\omegam = \Xm(h)h^{-1}\omegam\\
  &\nablam\omegaz = -\thalf q^{-2}\omegam+\thalf\Xm(h)h^{-1}\omegaz\\
  &\nablaz\omegap = -\thalf q^2(2+q^2)\omegap+\thalf q^{4}\Xp(h)h^{-1}\omegaz\\
  &\nablaz\omegam = \thalf(2+2q^{-2}-q^{-6})\omegam
    + \thalf q^{-4}\Xm(h)h^{-1}\omegaz\\
  &\nablaz\omegaz = -\thalf q^2\Xm(h)h^{-1}\omegap
    -\thalf q^{-2}\Xp(h)h^{-1}\omegam \, .
\end{align*}

\section{The quantum 2-sphere}

\noindent
The noncommutative (standard) Podle\'s sphere $S^2_q$
\cite{p:quantum.spheres} can be considered as a subalgebra of $\Stq$
by identifying the generators $\Bz,\Bp,\Bm$ of $\Stwoq$ as
\begin{align*}
  \Bz = cc^\ast\qquad
  \Bp = ca^\ast\qquad
  \Bm = ac^\ast = \Bp^\ast ,
\end{align*}
satisfying then the relations
\begin{alignat*}{2}
\Bm\,\Bz &= q^{2}\, \Bz\,\Bm   &\qquad &\Bp\,\Bz = q^{-2}\, \Bz\,\Bp    \\ 
\Bm\,\Bp &= q^2\, \Bz \,\big( \mid - q^2\, \Bz \big)  &\qquad &\Bp\,\Bm= \Bz \,\big( \mid - \Bz \big) . 
\end{alignat*}
These elements generate the fix-point algebra of the right $U(1)$-action
\begin{align*}
  \alpha_z(a) =az\qquad\alpha_z(a^\ast)=a^\ast\bar{z}\qquad
  \alpha_z(c) =cz\qquad\alpha_z(c^\ast)=c^\ast\bar{z}
\end{align*}
for $z\in U(1)$ and $a\in\Stq$, related to the $U(1)$-Hopf-fibration $\Stwoq\hookrightarrow \Stq$. 

Now, the left action of the $X_a$ does not preserve the algebra $\Stwoq$: one readily computes,
\begin{alignat*}{3}
  &\Xp \rtr \Bz = q a^\ast c^\ast &\quad
  &\Xm \rtr \Bz = -q^{-1} ca &\quad
  &\Xz \rtr \Bz = 0\\
  &\Xp \rtr \Bp = q (a^\ast)^2 &\quad
  &\Xm \rtr \Bp = c^2 &\quad
  &\Xz \rtr \Bp = 0\\
  &\Xp \rtr \Bm = q^{2} (c^\ast)^2 &\quad
  &\Xm \rtr \Bm = -q^{-1} (a)^2 &\quad
  &\Xz \rtr \Bm = 0.
\end{alignat*}
On the other hand, the right action of $X_a$ does preserve the algebra $\Stwoq$. 
Let us denote $Y_a=X_a$ for the right action. Then, it is easy to check that
\begin{alignat*}{3}
  &\Bz\ltr\Yp = q^{-1}\Bm &\quad
  &\Bz\ltr\Ym = -q^{-1}\Bp &\quad
  &\Bz\ltr\Yz = 0\\
  &\Bp\ltr\Yp = q\mid -q(1+q^2)\Bz &
  &\Bp\ltr\Ym = 0 &
  &\Bp\ltr\Yz = -q^2(1+q^2)\Bp\\
  &\Bm\ltr\Yp = 0 &
  &\hspace{-15mm}\Bm\ltr\Ym = -q^{-1}\mid + q^{-1}(1+q^2)\Bz &
  &\Bm\ltr\Yz = (1+q^{-2})\Bm.
\end{alignat*}
Note that when restricted to $\Stwoq$ the $Y_a$ are not independent. 
A long but straightforward computation shows that they are indeed related as
\begin{equation}\label{rel-rvf}
  \begin{split}
    \big( (f \ltr \Yp) \Bp \, q & + (f \ltr \Ym)  \Bm \, q^{-1} \big)(1+q^2) +
    (f \ltr \Yz) \para{ 1-2 \frac{1+q^2}{1+q^4} \Bz}\\
    & = (f \ltr \Yz^2) \, q^{-2} \para{ \frac{1-q^2}{1+q^4} \, ( 2 q^4 + q^2 + 1 ) \Bz - (1-q^6) \Bz^2} \\
    &\quad+ (f \ltr K^4) \, q^{-2} (1+q^2) \paraa{ (q^4 -1) \Bz + (1-q^6) \Bz^2},
  \end{split}
\end{equation}
for $f\in\Stwoq$.  This can be checked on a vector space basis
  for the algebra $\Stwoq$, a basis which can be taken as $X(m) (\Bz)^n$ for
  $m\in \integers$, $n \in\naturals$ with $X(m) = (\Bp)^m$ for
  $m\geq 0$ and $X(m) = (\Bm)^{-m}$ for $m < 0$ (cf. \cite{mnw1991}).

\subsection{A left covariant calculus on $\Stwoq$} 
Since the element $K$ acts (on the left) as the identity on $\Stwoq$, the differential
\eqref{df3} when restricted to $f\in\Stwoq$ becomes
\begin{align}\label{diff-s2l}
d f  =  (\Xm \rtr f) \,\omega_{-} + (\Xp \rtr f) \,\omega_{+} .
\end{align}
Note that $\Xpm \rtr f \notin \Stwoq$. In particular one finds
\begin{align*}
d\Bp &=  q \, (a^\ast)^{2} \, \omegap + c^{2} \, \omegam ,  \\
d\Bm &=  - q^{2}\, (c^\ast)^{2} \, \omegap - q^{-1} \, a^{2} \, \omegam,  \\
d\Bz & =  c^\ast a^\ast\, \omegap -q^{-1} c a  \, \omegam 
\end{align*}
which can be inverted to yield
\begin{align*}
\omegap & = q^{-1} a^2 \, d\Bp -q^2 c^2 \, d\Bm + (1+q^2) ac \, d\Bz  \\
\omegam & = (c^\ast)^{2} \, d\Bp - q (a^\ast)^{2} \, d\Bm - (1+q^2) c^\ast a^\ast \, d\Bz,
\end{align*}
implying that the differential in \eqref{diff-s2l} can be expressed as
\begin{align*}
d f & = \paraa{ q^{-1} (\Xp \rtr f) \, a^2 + (\Xm \rtr f) \, (c^\ast)^{2}}d\Bp \\
& \qquad - 
\paraa{ q^2 (\Xp \rtr f) \, + c^2 q (\Xm \rtr f) \, (a^\ast)^{2}} d\Bm \\ 
& \qquad + (1+q^2)\paraa{ (\Xp \rtr f) \, ac - (\Xm \rtr f) \, c^\ast a^\ast}d\Bz .
\end{align*}
From this expression one finds that the differential $d$ on $\Stwoq$ 
can be written in terms of the right acting operators $Y_a$.
\begin{lemma}
For $f\in\Stwoq$, the differential in \eqref{diff-s2l} can be written as
\begin{equation}\label{exd-q}
d f = (f \ltr V_+) \, d\Bp + (f\ltr V_-) \,\, d\Bm + (f \ltr V_0) \, d\Bz  
\end{equation}
where
\begin{align*}
V_+ & = \Yp\paraa{ 1 - q^{-2}(1+q^2) \Bz} q^{-1} - \Yz \,\, \Bm  \frac{q^{-2}(1+q^6)}{1+q^4} 
+ \Yz^2 \,\, \Bm\frac{1-q^2}{(1+q^2)(1+q^4)} 
\\~~\\
V_- & = - \Ym\paraa{ 1 - q^2(1+q^2) \Bz} q + \Yz \,\, \Bp  \frac{q^{-2}(1+q^6)}{1+q^4} 
- \Yz^2 \,\, \Bp \frac{1-q^2}{(1+q^2)(1+q^4)}
\\~~\\
V_0 & = \big( \Yp \,\, \Bp \, q^{-1} -  \Ym \,\, \Bm \, q \big) (1+q^2) + \Yz \,\, \Bz \frac{(1-q^4)(1+q^6)}{1+q^4}   
- \Yz^2 \,\, \Bz \frac{1-q^2}{1+q^4} .
\end{align*}
\end{lemma}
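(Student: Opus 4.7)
The plan is to verify the identity by expanding in the free basis $\{\omegap, \omegam, \omegaz\}$ of $\Omega^1(\Stq)$ and equating coefficients. The differentials $d\Bp, d\Bm, d\Bz$ computed just before the lemma all have vanishing $\omegaz$-component (they lie in the two-dimensional left $\Stq$-submodule spanned by $\omegap$ and $\omegam$), and for $f \in \Stwoq$ one has $\Xz \rtr f = 0$: since $K \rtr f = f$ one gets $K^4 \rtr f = f$, and then $\Xz = (1-K^4)/(1-q^{-2})$ annihilates $f$. Consequently the claim reduces to the two $\Stq$-valued identities
\begin{align*}
  q(f \ltr V_+)(a^\ast)^2 - q^2(f \ltr V_-)(c^\ast)^2 + (f \ltr V_0)\,c^\ast a^\ast &= \Xp \rtr f, \\
  (f \ltr V_+)\,c^2 - q^{-1}(f \ltr V_-)\,a^2 - q^{-1}(f \ltr V_0)\,ca &= \Xm \rtr f,
\end{align*}
to be verified for every $f \in \Stwoq$.

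Each expression $f \ltr V_j$ is interpreted, consistently with the way $V_j$ is written, as a sum of terms $(f \ltr Y)\cdot g$ with $Y \in \{\Yp, \Ym, \Yz, \Yz^2\}$ and $g \in \Stwoq$. Both sides of each of the two equations are $\complex$-linear in $f$, so it suffices to verify them on a vector space basis of $\Stwoq$; the PBW-type basis $\{X(m)\Bz^n : m \in \integers,\, n \in \naturals\}$ mentioned in the text is convenient. For the generators $f = \Bp, \Bm, \Bz$ the verification is direct from the right-action tables supplied before the lemma (together with the obvious iteration giving $f \ltr \Yz^2$), and the extension to products uses the twisted Leibniz rule $X_a(fg) = f X_a(g) + X_a(f)\sigma_a(g)$ on the left side and the coproduct-induced Leibniz rule $(fg) \ltr Y = (f \ltr \one{Y})(g \ltr \two{Y})$ on the right.

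The main obstacle, and the reason the formulas for $V_j$ look elaborate, is that the triple $d\Bp, d\Bm, d\Bz$ is $\Stq$-linearly dependent (it lies inside the two-dimensional submodule $\Stq\omegap \oplus \Stq\omegam$), so the representation of $df$ by a triple $(V_+, V_-, V_0)$ is inherently non-unique. The specific form claimed is singled out by requiring that each $V_j$ be a polynomial of degree at most one in $\Yp, \Ym$ and at most two in $\Yz$, with coefficients in $\Stwoq$; this normalization, together with consistency with the relation \eqref{rel-rvf}, fixes the rational coefficients $\tfrac{q^{-2}(1+q^6)}{1+q^4}$ and $\tfrac{1-q^2}{(1+q^2)(1+q^4)}$. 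Keeping track of these coefficients, and of the $K^4$ that naturally appears (and must be eliminated via $K^4 = 1 - (1-q^{-2})\Yz$) when translating left actions into right actions, constitutes the principal computational step.
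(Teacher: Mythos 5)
Your setup is correct and coincides with the route the paper takes: for $f\in\Stwoq$ one has $K\rtr f=f$, hence $\Xz\rtr f=0$, the differentials $d\Bp,d\Bm,d\Bz$ have no $\omegaz$-component, and equating coefficients in the free basis $\{\omegap,\omegam,\omegaz\}$ reduces \eqref{exd-q} to exactly your two $\Stq$-valued identities; the paper then, like you, verifies on the vector space basis $X(m)(\Bz)^n$. Your closing paragraph about the non-uniqueness of the triple $(V_+,V_-,V_0)$ concerns how the formulas were found, not whether they hold, and plays no role in a verification proof.

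The gap is in the step ``the extension to products uses the twisted Leibniz rule \ldots on the left side and the coproduct-induced Leibniz rule \ldots on the right.'' Agreement on the generators $\Bp,\Bm,\Bz$ propagates to products only if both sides of the identity obey the \emph{same} product rule, and here they visibly do not. The side $\Xp\rtr f$ satisfies $\Xp\rtr(fg)=f(\Xp\rtr g)+(\Xp\rtr f)(K^2\rtr g)$, and since $K^2\rtr g=g$ on $\Stwoq$ it is an ordinary derivation there. The other side is a sum of terms $(f\ltr Y)\,b$ with coefficients $b$ multiplied on the right; expanding $(fg)\ltr Y$ by the coproduct ($\Delta(\Yp)=\Yp\otimes K^2+1\otimes\Yp$, $\Delta(\Yz)=\Yz\otimes K^4+1\otimes\Yz$, and $\Delta(\Yz^2)$ with a cross term $2\,\Yz\otimes\Yz K^4$) inserts factors $(g\ltr K^2)$, $(g\ltr K^4)$, $(g\ltr\Yz K^4)$ between $(f\ltr Y)$ and $b$, and these are not the identity on $\Stwoq$ (e.g.\ $\Bp\ltr K^2=q^2\Bp$) nor do they commute past $b$. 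Consequently the set of $f$ on which your two identities hold is not visibly closed under multiplication; making the cross terms cancel requires the relation \eqref{rel-rvf} together with the commutation relations of $\Stq$, and that is precisely the content of the ``tedious but straightforward computation'' that the paper carries out on \emph{every} basis element $X(m)(\Bz)^n$, not just on the generators. To close the gap you must either perform that per-basis-element computation (induction on the exponents, with genuine algebra at each step), or first prove that $D(f)=(f\ltr V_+)\,d\Bp+(f\ltr V_-)\,d\Bm+(f\ltr V_0)\,d\Bz$ satisfies the untwisted rule $D(fg)=fD(g)+D(f)g$ on $\Stwoq$ --- a claim you neither state nor establish, and which is essentially equivalent to the lemma itself.
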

\begin{proof}
By acting on the vector space basis $X(m)(\Bz)^n$ (as introduced previously), one explicitly checks the equality of \eqref{diff-s2l} and \eqref{exd-q} via a tedious but straightforward computation.
\end{proof}

\begin{remark}
  When $q=1$ the derivative \eqref{exd-q} reduces to
  \begin{equation}\label{classdf}
    \begin{split}
      d f &= 2 \paraa{ ( f \ltr \Yp) \,\, \Bp  - (f \ltr \Ym) \,\, \Bm} d\Bz \\
      & \quad + \paraa{ ( f \ltr \Yp)  \,\, (1-2 \Bz) - ( f \ltr \Yz)  \,\, \Bm } d\Bp \\
      & \quad + \paraa{ - ( f \ltr \Ym) \,\, (1-2 \Bz) +  (f \ltr \Yz) \,\, \Bp} d\Bm \, .       
    \end{split}
  \end{equation}
Classically, the vector field $X_a$ are the left invariant vector fields on $S^3 = SU(2)$ 
with dual left invariant forms $\omega_a$. Thus they do not project to vector fields on the base space $S^2$  
with commuting coordinates $(\Bp, \Bm, \Bz)$ and relation $\Bp \Bm = \Bz (1-\Bz)$: 
$X_a \rtr f $ is not a function on $S^2$ even when $f$ is. On the other hand, the vector fields
$Y_a$ are the right invariant vector fields on $SU(2)$ and thus they project to vector fields on $S^2$, where they are not independent any longer and are related by  
\begin{align} \label{classrel-rvf}
2 ( \Bp \Yp + \Bm \Ym ) + (1-2 \Bz) \Yz = 0 ,
\end{align}
which is just the relation to which \eqref{rel-rvf} reduces when $q=1$.

By changing coordinates $B_0 = \tfrac{1}{2}(1-x)$ so that the radius condition for $S^2$ is written as $r^2 = 4 \Bp \Bm + x^2$, 
the exterior derivative operator in \eqref{classdf} becomes
\begin{align*} 
d f = \partial_x f \, d x + \partial_+ f \, d \Bp + \partial_- f \, d \Bm 
- (\Delta f) \, ( x\, dx + 2 \Bm \, d\Bp + 2 \Bp \, d\Bm ) 
\end{align*}
where $\Delta = x \, \partial_x + \Bp \, \partial_+ + \Bm \, \partial_-$ is the Euler (dilatation) vector field. One then computes 
$d r^2 = 2 ( 1 - r^2 ) ( x\, dx + 2 \Bm \, d\Bp + 2 \Bp \, d \Bm )$, which vanishes when restricting to $S^2$: $r^2-1=0$. 

The form \eqref{diff-s2l} of the differential that uses left invariant vector fields and forms 
can be seen as identifying the cotangent bundle of $S^2$ with the direct sum of the line bundles of `charge' $\pm 2$, that is   
$\Omega^{1}(S^2) \simeq  \L_{-2} \omega_{-} \oplus \L_{+2} \omega_{+}$. This identification can be used also for the quantum 
sphere $\Stwoq$ with the line bundles defined as in \eqref{linbun} below. 
\end{remark}

\subsection{Connections on projective modules over $\Stwoq$}
The definitions of $\sigma$-modules and
$q$-affine connections apply equally well to the algebra $\Stwoq$.
Note that the right action of $K$ preserve $\Stwoq$
\begin{align*}
  \Bz\ltr K = \Bz\qquad
  \Bp\ltr K = q\Bp\qquad
  \Bm\ltr K = q^{-1}\Bm.
\end{align*}
implying that $\sigma_a,\sigmas_a$ leave $\Stwoq$ invariant.

In this section, we will construct $q$-affine connections on a class
of projective modules over $\Stwoq$
(cf. \cite{bm:line.bundles.quantum.spheres,hm:projective.monopole,l:twisted.sigma.model}). For
$n\geq 0$ and $\mu=0,1,\ldots,n$, let
$(\Psi_n)_\mu,(\Phi_n)_\mu\in\Stq$ be given as
\begin{align*}
  (\Phi_n)_\mu = \sqrt{\alpha_{n\mu}}c^{n-\mu}a^{\mu} \qquad
  \quad
 (\Psi_n)_\mu = \sqrt{\beta_{n\mu}}(c^\ast)^{\mu}(a^\ast)^{n-\mu}
\end{align*}
with
\begin{align*}
  \alpha_{n\mu}=\prod_{k=0}^{n-\mu-1}\frac{1-q^{2(n-k)}}{1-q^{2(k+1)}}\qquad\qquad
  \beta_{n\mu}=q^{2\mu}\prod_{k=0}^{\mu-1}\frac{1-q^{-2(n-k)}}{1-q^{-2(k+1)}}.
\end{align*}
It is straight-forward to check that
\begin{align*}
  \sum_{\mu=0}^{n}(\Phi_n)_{\mu}^\ast(\Phi_n)_{\mu}=
  \sum_{\mu=0}^{n}(\Psi_n)_{\mu}^\ast(\Psi_n)_{\mu}= \mid,
\end{align*}
implying that
\begin{align*}
  &(p_{n})_{\mu}^{\nu} = (\Psi_n)_\mu(\Psi_n)^\ast_{\nu}
  =\sqrt{\beta_{n\mu}\beta_{n\nu}}(c^\ast)^{\mu}(a^\ast)^{n-\mu}a^{n-\nu}c^{\nu}\\
  &(p_{-n})_{\mu}^{\nu} = (\Phi_n)_\mu(\Phi_n)^\ast_{\nu}
  =\sqrt{\alpha_{n\mu}\alpha_{n\nu}}c^{n-\mu}a^\mu(a^\ast)^\nu (c^\ast)^{n-\nu}
\end{align*}
satisfy $p_n^2=p_n$ and $p_{-n}^2=p_{-n}$. Moreover, it is easy to see that
$(p_{n})_{\mu}^\nu,(p_{-n})_{\mu}^\nu\in\Stwoq$, which implies that
\begin{align*}
  M_n =
  \begin{cases}
    p_n(\Stwoq)^{n+1}&\text{if }n\geq 0\\
    p_{-|n|}(\Stwoq)^{|n|+1}&\text{if }n<0
  \end{cases}
\end{align*}
are finitely generated projective $\Stwoq$-modules for
$n\in\integers$. Let us recall that these modules are isomorphic to
the components in a (vector space) decomposition of $\Stq$
\begin{align*}
  \Stq = \oplus_{n\in\integers}\L_n
\end{align*}
with
\begin{align} \label{linbun}
  \L_n = \{f\in\Stq:\alpha_z(f) = \bar{z}^nf\},
\end{align}
and it follows that $\L_0 = \Stwoq$, as well as
$\L_n\L_m\subseteq\L_{n+m}$. For $f\in \Stwoq$ and $f_n\in\L_n$ one
has
\begin{align*}
  \alpha_z(ff_n) = \alpha_z(f)\alpha_z(f_n) = \bar{z}^nff_n,
\end{align*}
which implies that $\L_n$ is a left $\Stwoq$-module. Furthermore, it
is easy to see that the right action of $\Uqsu$ leaves each $\L_n$
invariant. Let $\{e_\mu\}_{\mu=0}^n$ be a basis of $(\Stwoq)^{n+1}$ and
let $\phi_n^0:(\Stwoq)^{n+1}\to\L_n$ be defined as
\begin{align*}
  \phi_n^0(m^\mu e_{\mu}) =
  \begin{cases}
    m^\mu (\Psi_n)_\mu &n\geq 0\\
    m^\mu(\Phi_{|n|})_\mu &n<0
  \end{cases},  
\end{align*}
and we note that $\phi^0_n(m)\in\L_n$ since $(\Psi_n)_\mu\in\L_n$ and $(\Phi_n)_\mu\in L_{-n}$ (for $n\geq 0$).

\begin{lemma}
  Let $m\in(\Stwoq)^{n+1}$. If $p_n(m)=0$ then $\phi^0_n(m)=0$.
\end{lemma}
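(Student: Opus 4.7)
The plan is to use the normalisation identity $\sum_\mu (\Psi_n)_\mu^\ast (\Psi_n)_\mu = \mid$ (and its $\Phi$-analogue for $n<0$) already established in the excerpt, which is exactly what makes $p_n = \Psi\Psi^\ast$ an honest projection, to force the inclusion $\ker p_n \subseteq \ker\phi^0_n$. The argument is a short direct calculation and I do not anticipate any serious obstacle.

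Focusing on $n\geq 0$ (the case $n<0$ is identical with $(\Phi_{|n|})_\mu$ replacing $(\Psi_n)_\mu$), write $m = m^\mu e_\mu$ and set $\pi := \phi^0_n(m) = m^\mu (\Psi_n)_\mu$. Using the defining formula $(p_n)_\mu^\nu = (\Psi_n)_\mu (\Psi_n)_\nu^\ast$ together with the convention $p_n(e_\mu) = (p_n)_\mu^\nu e_\nu$ (consistent with the idempotency check $(p_n)_\mu^\nu (p_n)_\nu^\rho = (p_n)_\mu^\rho$ recorded in the excerpt), the hypothesis $p_n(m)=0$ unpacks componentwise as
\[
m^\mu (\Psi_n)_\mu (\Psi_n)_\nu^\ast \;=\; \pi\,(\Psi_n)_\nu^\ast \;=\; 0 \qquad \text{for every } \nu=0,1,\ldots,n.
\]
Next I would right-multiply the $\nu$-th identity by $(\Psi_n)_\nu$ and sum over $\nu$, invoking the normalisation to collapse the sum:
\[
0 \;=\; \sum_{\nu=0}^n \paraa{\pi\,(\Psi_n)_\nu^\ast}(\Psi_n)_\nu \;=\; \pi \sum_{\nu=0}^n (\Psi_n)_\nu^\ast (\Psi_n)_\nu \;=\; \pi\cdot\mid \;=\; \phi^0_n(m),
\]
which is precisely the claim.

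Conceptually the lemma records nothing more than the structural fact that the factorisation $p_n = \Psi\Psi^\ast$ with $\Psi^\ast\Psi = \mid$ realises $p_n$ as a genuine projection whose kernel coincides with $\ker\phi^0_n$. The only point that would require any care in the write-up is pinning down the matrix convention for how $p_n$ acts on column vectors in $(\Stwoq)^{n+1}$; with the convention above (matched to the idempotency identity in the excerpt) the two displayed lines finish the proof.
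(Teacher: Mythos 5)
Your proposal is correct and follows essentially the same route as the paper's proof: both use the factorisation $(p_n)_\mu^\nu = (\Psi_n)_\mu(\Psi_n)_\nu^\ast$ to rewrite the hypothesis $p_n(m)=0$, then right-multiply by $(\Psi_n)_\nu$, sum over $\nu$, and invoke the normalisation $\sum_\nu(\Psi_n)_\nu^\ast(\Psi_n)_\nu=\mid$ to conclude $\phi^0_n(m)=0$. The only cosmetic difference is that you name $\pi=\phi^0_n(m)$ and factor it out before summing, whereas the paper keeps the double sum and factors at the end; the content is identical.
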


\begin{proof}
  If $p_n(m)=0$ then (for $n\geq 0$)
  \begin{align*}
    &\sum_{\mu=0}^nm^\mu (p_n)_\mu^\nu=0\quad\forall\,\nu\implies
      \sum_{\mu,\nu=0}^nm^\mu (p_n)_\mu^\nu(\Psi_n)_{\nu}=0\implies\\
    &\parab{\sum_{\mu=0}^nm^\mu(\Psi_n)_\mu}\parab{\sum_{\nu=0}^n(\Psi_n)_{\nu}^\ast(\Psi_n)_{\nu}}=0
      \implies\sum_{\mu=0}^nm^\mu(\Psi_n)_\mu=0
  \end{align*}
  which is equivalent to $\phi^0_n(m)=0$. The proof for $n<0$ is analogous.
\end{proof}

\noindent
The above result implies that $\phi_n^0$ descends to a module
homomorphism $\phi_n:M_n\to\L_n$, and one can show that $\phi_n$ is in
fact an isomorphism. To simplify the
presentation, let us in the following assume that $n\geq 0$.  As
generators of $M_n$ one can choose $\eh_\mu=p_n(e_\mu)$ for
$\mu=0,1,\ldots,n$, and one notes that
\begin{align*}
  \phi_n(\eh_\mu) = (p_n)_\mu^\nu(\Psi_n)_\nu = (\Psi_n)_{\mu},
\end{align*}
implying that $\{(\Psi_n)_\mu\}_{\mu=0}^n$ generates $\L_n$.  Since
the $\Stwoq$-module $\L_n$ is a subset of $\Stq$ which is also invariant
under the right action of $\Uqsu$, it is naturally a $\sigma$-module
with respect to the right action of $\sigma_a$ and
$\sigmas_a$; one finds that
\begin{align*}
  (\Psi_n)_{\mu}\ltr K = q^{\tfrac{1}{2}(n-2\mu)}(\Psi_n)_{\mu}\qquad
  (\Psi_n)_{\mu}^\ast\ltr K = q^{-\tfrac{1}{2}(n-2\mu)}(\Psi_n)_{\mu}^\ast
\end{align*}
giving
\begin{alignat*}{2}
  &\sigmapm\paraa{(\Psi_n)_\mu} = q^{n-2\mu}(\Psi_n)_\mu &\qquad
  &\sigmaz\paraa{(\Psi_n)_\mu} = q^{2(n-2\mu)}(\Psi_n)_\mu\\
  &\sigmaspm\paraa{(\Psi_n)_\mu} = q^{-(n-2\mu)}(\Psi_n)_\mu &\qquad
  &\sigmasz\paraa{(\Psi_n)_\mu} = q^{-2(n-2\mu)}(\Psi_n)_\mu.
\end{alignat*}
Correspondingly, we would like to define a $\sigma$-module structure
on $M_n$ such that $\phi_n$ is a morphism of $\sigma$-modules. To this
end, we start by introducing a $\sigma$-module structure on
$(\Stwoq)^{n+1}$ in analogy with
Proposition~\ref{prop:q.affine.projective}; namely, starting from
\begin{alignat*}{2}
  &\sigmah^0_\pm(e_\mu) = q^{n-2\mu}e_\mu&\qquad
  &\sigmah^0_z(e_\mu) = q^{2(n-2\mu)}e_\mu\\
  &(\sigmah^0_\pm)^\ast(e_\mu) = q^{-(n-2\mu)}e_\mu&\qquad
  &(\sigmah^0_z)^\ast(e_\mu) = q^{-2(n-2\mu)}e_\mu
\end{alignat*}
we set $\sigmah_a=p\circ\sigmah_a^0:M_n\to M_n$ and
$\sigmah_a^\ast=p\circ(\sigmah_a^0)^\ast:M_n\to M_n$. Since
\begin{align*}
  (p_n)_\mu^\nu\ltr K = \paraa{(\Psi_n)_\mu(\Psi_n)_\nu}\ltr K
  =q^{-(\mu-\nu)}(p_n)_\mu^\nu
\end{align*}
one finds that
\begin{align*}
  \sigmah_\pm(\eh_\mu) &=(p_n\circ\sigmah^0_\pm)\paraa{(p_n)_\mu^\nu e_\nu}=
  \sigma_\pm\paraa{(p_n)_\mu^\nu}(p_n\circ\sigmah^0_\pm)(e_\nu)\\
                       &=q^{-2(\mu-\nu)}q^{n-2\nu}p_n(e_{\nu})
                         =q^{n-2\mu}\eh_{\mu}\\
  \sigmahz(\eh_\mu) &= q^{2(n-2\mu)}\eh_\mu
\end{align*}
(and similarly for $\sigmah_a^\ast$) implying that $\phi_n$ is a
$\sigma$-module isomorphism. Finally, let us now turn to the question
of finding $q$-affine connections on $M_n$ that are compatible with a
given hermitian form. Thus, assume that $h$ is a hermitian form on
$(\Stwoq)^{n+1}$ for which $p_n$ is an orthogonal
projection. According to Proposition~\ref{prop:nabla.comp.metric},
every $q$-affine connection $\nabla_0$ on $(\Stwoq)^{n+1}$ may be written in the
form $\nabla^0_{X_a}e_\mu=\Gamma_{a\mu}^\nu e_\nu$ with 
\begin{align}\label{eq:Gammat.S2q}
  \begin{split}
    &\Gammat_{+\mu,\nu} = \tfrac{1}{2}\Xp(h_{\mu\nu})+K(\alpha_{\mu\nu})+iK(\beta_{\mu\nu})\\
    &\Gammat_{-\mu,\nu} = \tfrac{1}{2}\Xm(h_{\mu\nu})+K(\alpha_{\mu\nu})-iK(\beta_{\mu\nu})\\
    &\Gammat_{z\mu,\nu} = \tfrac{1}{2}\Xz(h_{\mu\nu})+K^2(\rho_{\mu\nu}),
  \end{split}
\end{align}
for an arbitrary choice of hermitian
$\alpha,\beta,\rho\in\Mat_{n+1}(\Stwoq)$ and, furthermore,
Proposition~\ref{prop:q.affine.projective} shows that
$\nabla=p_n\circ\nabla^0$ is a $q$-affine connection on $M_n$,
compatible with the restriction of $h$ to $M_n$. For the generators
$\{\eh_\mu\}_{\mu=0}^n$ one finds that
\begin{align*}
  \nabla_{\Xp}\eh_\mu &= (p_n\circ\nabla^0)\paraa{(p_n)_\mu^\nu e_\nu}
  =p\paraa{(p_n)_\mu^\nu\Gamma_{+\nu}^\kappa e_\kappa+\Xp\paraa{(p_n)_\mu^\nu}\sigmah_+(e_\nu)}\\
  &=\parab{(p_n)_\mu^\nu\Gamma_{+\nu}^\kappa+q^{n-2\mu}\Xp\paraa{(p_n)_\mu^\kappa}}\eh_\kappa\\
  \nabla_{\Xm}\eh_\mu &=\parab{(p_n)_\mu^\nu\Gamma_{-\nu}^\kappa+q^{n-2\mu}\Xm\paraa{(p_n)_\mu^\kappa}}\eh_\kappa\\
  \nabla_{\Xz}\eh_\mu &=\parab{(p_n)_\mu^\nu\Gamma_{z\nu}^\kappa+q^{2(n-2\mu)}\Xz\paraa{(p_n)_\mu^\kappa}}\eh_\kappa.
\end{align*}
In the current case, one finds that
\begin{align*}
  &\htilde_{\pm\mu\nu} = h\paraa{(\sigmahpm^0)^\ast(e_\mu),e_\nu} =
    q^{2\mu-n}h_{\mu\nu}\implies
  \htilde_{\pm}^{\mu\nu}=h^{\mu\nu}q^{n-2\nu}\\
  &\htilde_{z\mu\nu} = h\paraa{(\sigmahz^0)^\ast(e_\mu),e_\nu}
    =q^{2(2\mu-n)}h_{\mu\nu}\implies
    \htilde_{z}^{\mu\nu} = h^{\mu\nu}q^{2(n-2\nu)}
\end{align*}
giving
\begin{align*}
  &\Gamma_{\pm\mu}^\nu = \Gammat_{\pm\mu,\rho}
    \sigmapm\paraa{\htilde_{\pm}^{\rho\mu}}
  =q^{2\nu-n}\Gammat_{\pm\mu,\rho}K^2(h^{\rho\nu})\\
  &\Gamma_{z\mu}^\nu = \Gammat_{z\mu,\rho}
    \sigmaz(\htilde_{z}^{\rho\nu})
    =q^{2(2\nu-m)}\Gammat_{z\mu,\rho}K^4(h^{\rho\nu})
\end{align*}
Thus, for the choice $\alpha=\beta=\rho=0$ in \eqref{eq:Gammat.S2q},
one obtains the formulas
\begin{align*}
  \nabla_{\Xpm}\eh_\mu
  &=\parac{\tfrac{1}{2}q^{2\kappa-n}(p_n)_\mu^\nu\Xpm(h_{\nu\rho})K^2(h^{\rho\kappa})
    +q^{n-2\mu}\Xpm\paraa{(p_n)_\mu^\kappa}}\eh_\kappa\\
  \nabla_{\Xz}\eh_\mu
  &=\parac{\tfrac{1}{2}q^{2(2\kappa-n)}(p_n)_\mu^\nu\Xz(h_{\nu\rho})K^4(h^{\rho\kappa})
    +q^{2(n-2\mu)}\Xz\paraa{(p_n)_\mu^\kappa}}\eh_\kappa.
\end{align*}

\section{Computation of Christoffel symbols}\label{sec:appendix}

\noindent
In this section we work out an explicit expression for a metric
and torsion free connection on $\OmegaStq$, as presented in Section \ref{se:4}, for the case
when the metric is invariant by $K$, that is $K(h_{ab})=h_{ab}$, and
$K(\omega_a)=\omega_a$ for $a,b=1,2,3$. 
As previously noted, for such a metric, $\Gammat_{ab,c} = \Gamma_{ab}^ph_{pc}$
and the torsion free equations become (cf. \eqref{ga1}-\eqref{ga3}):
\begin{align*}
  &\Gammat_{-+,a}-q^2\Gammat_{+-,a} = h_{za}\\
  q^2&\Gammat_{z-,a}-q^{-2}\Gammat_{-z,a} = (1+q^2)h_{-a}\\
  q^{2}&\Gammat_{+z,a}-q^{-2}\Gammat_{z+,a} = (1+q^2)h_{+a} .
\end{align*}
These can be solved as
\begin{align}
  &\Gammat_{-+,a} = \tfrac{1}{2}h_{za}+qK(\gone_a)\\
  &\Gammat_{+-,a} = -\tfrac{1}{2}q^{-2}h_{za}+q^{-1}K(\gone_a)\\
  &\Gammat_{z-,a} = \tfrac{1}{2}q^{-2}(1+q^2)h_{-a}+q^{-2}K(\gtwo_a)\\
  &\Gammat_{-z,a} = -\tfrac{1}{2}q^{2}(1+q^2)h_{-a}+q^{2}K(\gtwo_a)\\
  &\Gammat_{+z,a} = \tfrac{1}{2}q^{-2}(1+q^2)h_{+a}+q^{-2}K(\gthree_a)\\
  &\Gammat_{z+,a} = -\tfrac{1}{2}q^{2}(1+q^2)h_{+a}+q^{2}K(\gthree_a)
\end{align}
for arbitrary $\gone_a,\gtwo_a,\gthree_a\in\Stq$. On the other hand,
Proposition~\ref{prop:nabla.comp.metric} gives general expressions
for $\Gammat_{ab,c}$ for a metric connection, and combining the two
results yields the following set of equations to be solved:
\begin{align}
  &\tfrac{1}{2}\Xm(h_{+a})+K(\gamma_{a+}^\ast) = \tfrac{1}{2}h_{za}+qK(\gone_a)\\
  &\tfrac{1}{2}\Xp(h_{-a})+K(\gamma_{-a}) = -\tfrac{1}{2}q^{-2}h_{za}+q^{-1}K(\gone_a)\\
  &\tfrac{1}{2}\Xz(h_{-a}) + K^2(\rho_{-a}) = \tfrac{1}{2}q^{-2}(1+q^2)h_{-a}+q^{-2}K(\gtwo_a)\\
  &\tfrac{1}{2}\Xm(h_{za}) + K(\gamma_{az}^\ast) = -\tfrac{1}{2}q^{2}(1+q^2)h_{-a}+q^{2}K(\gtwo_a)\\
  &\tfrac{1}{2}\Xp(h_{za})+K(\gamma_{za}) = \tfrac{1}{2}q^{-2}(1+q^2)h_{+a}+q^{-2}K(\gthree_a)\\
  &\tfrac{1}{2}\Xz(h_{+a})+K^2(\rho_{+a}) = -\tfrac{1}{2}q^{2}(1+q^2)h_{+a}+q^{2}K(\gthree_a)  
\end{align}
where $\gamma_{ab}=\alpha_{ab}+i\beta_{ab}$. These equations can be rewritten as
\begin{align}
  \gamma_{-a} &= -\thalf K^{-1}\Xp(h_{-a})-\thalf q^{-2}h_{za}+q^{-1}\gone_a
                \label{eq:lc.gma}\\
  \gamma_{a+} &= \thalf K^{-1}\Xp(h_{a+})+\thalf h_{az}+q(\gone_a)^\ast
                \label{eq:lc.gap}\\
  \rho_{-a} &= -\thalf K^{-2}\Xz(h_{-a})+\thalf q^{-2}(1+q^2)h_{-a}+q^{-2}K^{-1}(\gtwo_a)\label{eq:lc.rhoma}\\
  \gamma_{az} &= \thalf K^{-1}\Xp(h_{az})-\thalf q^2(1+q^2)h_{a-}+q^2(\gtwo_a)^\ast\label{eq:lc.gaz}\\
  \gamma_{za} &= -\thalf K^{-1}\Xp(h_{za})+\thalf q^{-2}(1+q^2)h_{+a}+q^{-2}\gthree_a\label{eq:lc.gza}\\
  \rho_{+a} &= -\thalf K^{-2}\Xz(h_{+a})-\thalf q^2(1+q^2)h_{+a}+q^2K^{-1}(\gthree_a).\label{eq:lc.rhopa}
\end{align}
When solving for $\gamma_{ab}$ and $\rho_{ab}$ from the above
equations, one finds several ambiguities and constraints. Namely,
there are multiple expressions for $\gamma_{-+}$, $\gamma_{-z}$,
$\gamma_{z+}$ and $\gamma_{zz}$, and we require (from
Proposition~\ref{prop:nabla.comp.metric}) that
$\rho_{++}^\ast=\rho_{++}$, $\rho_{--}^\ast=\rho_{--}$ and
$\rho_{+-}^\ast=\rho_{-+}$. Let us consider these constraints one by
one.

\medskip

\noindent
\underline{$\gamma_{-+}$}: Equations \eqref{eq:lc.gma} and
\eqref{eq:lc.gap} give two expressions for $\gamma_{-+}$:
\begin{align*}
  &\gamma_{-+} = -\thalf K^{-1}\Xp(h_{-+})-\thalf q^{-2}h_{z+}+q^{-1}\gone_+\\
  &\gamma_{-+} = \thalf K^{-1}\Xp(h_{-+})+\thalf h_{-z}+q(\gone_-)^\ast
\end{align*}
which coincide if
\begin{align*}
  &\gone_+ = \thalf q K^{-1}\Xp(h_{-+})+\thalf q^{-1}h_{z+}+q\tau_1\\
  &\gone_- = -\thalf q^{-1}K^{-1}\Xm(h_{+-})-\thalf q^{-1}h_{z-}+q^{-1}\tau_1^\ast
\end{align*}
for arbitrary $\tau_1\in\Stq$, giving $\gamma_{-+}=\tau_1$.

\medskip

\noindent
\underline{$\gamma_{-z}$}: Equations \eqref{eq:lc.gma} and
\eqref{eq:lc.gaz} give two expressions for $\gamma_{-z}$:
\begin{align*}
  &\gamma_{-z} = -\thalf K^{-1}\Xp(h_{-z})-\thalf q^{-2}h_{zz}+q^{-1}\gone_z\\
  &\gamma_{-z} = \thalf K^{-1}\Xp(h_{-z})-\thalf q^2(1+q^2)h_{--}+q^{2}(\gtwo_-)^\ast
\end{align*}
which coincide if
\begin{align}
  &\gone_z = \thalf q K^{-1}\Xp(h_{-z})+\thalf q^{-1}h_{zz}+q\tau_2\label{eq:lc.gonez.1}\\
  &\gtwo_- = \thalf q^{-2} K^{-1}\Xm(h_{z-})+\thalf(1+q^2)h_{--}+q^{-2}\tau_2^\ast\notag
\end{align}
for arbitrary $\tau_2\in\Stq$, giving $\gamma_{-z}=\tau_2$.

\medskip

\noindent
\underline{$\gamma_{z+}$}: Equations \eqref{eq:lc.gap} and
\eqref{eq:lc.gza} give two expressions for $\gamma_{z+}$:
\begin{align*}
  &\gamma_{z+} = \thalf K^{-1}\Xp(h_{z+})+\thalf h_{zz}+q(\gone_z)^\ast\\
  &\gamma_{z+} = -\thalf K^{-1}\Xp(h_{z+})+\thalf q^{-2}(1+q^2)h_{++}
    +q^{-2}\gthree_+
\end{align*}
which coincide if
\begin{align}
  &\gone_z = \thalf q^{-1}K^{-1}\Xm(h_{+z})-\thalf q^{-1}h_{zz}+q^{-1}\tau_3^\ast\label{eq:lc.gonez.2}\\
  &\gthree_+ = \thalf q^{2}K^{-1}\Xp(h_{z+})-\thalf(1+q^2)h_{++}+q^2\tau_3\notag
\end{align}
for arbitrary $\tau_3\in\Stq$, giving $\gamma_{z+}=\tau_3$.

\medskip

\noindent
\underline{$\gamma_{zz}$}: Equations \eqref{eq:lc.gaz} and
\eqref{eq:lc.gza} give two expressions for $\gamma_{zz}$:
\begin{align*}
  &\gamma_{zz} = \thalf K^{-1}\Xp(h_{zz})-\thalf q^{2}(1+q^2)h_{z-}+q^{2}(\gtwo_z)^\ast\\
  &\gamma_{zz} = -\thalf K^{-1}\Xp(h_{zz})+\thalf q^{-2}(1+q^2)h_{+z}+q^{-2}\gthree_z
\end{align*}
which coincide if
\begin{align*}
  &\gtwo_z = \thalf q^{-2}K^{-1}\Xm(h_{zz})+\thalf(1+q^2)h_{-z}+q^{-2}\tau_4^\ast\\
  &\gthree_z = \thalf q^2 K^{-1}\Xp(h_{zz})-\thalf(1+q^2)h_{+z}+q^{2}\tau_4
\end{align*}
for arbitrary $\tau_4\in\Stq$, giving $\gamma_{zz}=\tau_4$.

\medskip

\noindent
The above considerations determine $\gone_+$, $\gone_-$, $\gtwo_-$,
$\gthree_+$, $\gtwo_z$, $\gthree_z$, $\gone_z$, but give two
expressions for $\gone_z$; equating them gives the following.

\medskip

\noindent
\underline{$\gone_z$}: Equations \eqref{eq:lc.gonez.1} and
\eqref{eq:lc.gonez.2} give two expressions for $\gone_z$:
\begin{align*}
  &\gone_z = \thalf q K^{-1}\Xp(h_{-z})+\thalf q^{-1}h_{zz}+q\tau_2\\
  &\gone_z = \thalf q^{-1}K^{-1}\Xm(h_{+z})-\thalf q^{-1}h_{zz}+q^{-1}\tau_3^\ast
\end{align*}
which coincide if
\begin{align*}
  &\tau_2 = -\thalf K^{-1}\Xp(h_{-z})-\thalf q^{-2}h_{zz}+q^{-1}\mu_1\\
  &\tau_3 = \thalf K^{-1}\Xp(h_{z+})+\thalf h_{zz}+q\mu_1^\ast
\end{align*}
for arbitrary $\mu_1\in\Stq$, giving $\gone_z=\mu_1$ and
\begin{align*}
  &\gtwo_- = q^{-2} K^{-1}\Xm(h_{z-})+\thalf(1+q^2)h_{--}
    -\thalf q^{-4}h_{zz}+q^{-3}\mu_1^\ast\\
  &\gthree_+ = q^{2}K^{-1}\Xp(h_{z+})-\thalf(1+q^2)h_{++}
    +\thalf q^2h_{zz}+q^3\mu_1^\ast.
\end{align*}

\medskip

\noindent Let us summarize what we have obtained so far:
\begin{align}
  &\gone_+ = \thalf q K^{-1}\Xp(h_{-+})+\thalf q^{-1}h_{z+}+q\tau_1\\  
  &\gone_- = -\thalf q^{-1}K^{-1}\Xm(h_{+-})-\thalf q^{-1}h_{z-}+q^{-1}\tau_1^\ast\\
  &\gone_z = \mu_1\\
  &\gtwo_- = q^{-2} K^{-1}\Xm(h_{z-})+\thalf(1+q^2)h_{--}
    -\thalf q^{-4}h_{zz}+q^{-3}\mu_1^\ast\label{eq:lc.gtwom}\\
  &\gthree_+ = q^{2}K^{-1}\Xp(h_{z+})-\thalf(1+q^2)h_{++}
    +\thalf q^2h_{zz}+q^3\mu_1^\ast\label{eq:lc.gthreep}\\
  &\gtwo_z = \thalf q^{-2}K^{-1}\Xm(h_{zz})+\thalf(1+q^2)h_{-z}+q^{-2}\tau_4^\ast\\
  &\gthree_z = \thalf q^2 K^{-1}\Xp(h_{zz})-\thalf(1+q^2)h_{+z}+q^{2}\tau_4,
\end{align}
for arbitrary $\mu_1,\tau_1,\tau_4$. It remains to solve the
constraints $\rho_{++}^\ast=\rho_{++}$, $\rho_{--}^\ast=\rho_{--}$ and
$\rho_{+-}^\ast=\rho_{-+}$.

\medskip

\noindent
\underline{$\rho_{++}$}: From equation \eqref{eq:lc.rhopa} one obtains
\begin{align*}
  \rho_{++} = -\thalf q^2(1+q^2)h_{++}+q^2K^{-1}(\gthree_+)
\end{align*}
by using that $\Xz(h_{ab}) = 0$. Requiring $\rho_{++}^\ast = \rho_{++}$ gives
\begin{align*}
  \gthree_+ = K^2\paraa{(\gthree_+)^\ast}
\end{align*}
and inserting \eqref{eq:lc.gthreep} yields
\begin{align*}
  K^2(\mu_1)-\mu_1^\ast = q^{-1}K^{-1}\Xp(h_{z+})
  +q^{-1}K\Xm(h_{+z})
\end{align*}
which is solved by
\begin{align}
  \mu_1 = q^{-1}K^{-1}\Xm(h_{+z})+K^{-1}(f_1)\label{eq:lc.mu1.1}
\end{align}
for arbitrary hermitian $f_1\in\Stq$.

\medskip

\noindent
\underline{$\rho_{--}$}: From equation \eqref{eq:lc.rhoma} one obtains
\begin{align*}
  \rho_{--} = \thalf q^{-2}(1+q^2)h_{--}+q^{-2}K^{-1}(\gtwo_-)
\end{align*}
by using that $\Xz(h_{ab}) = 0$. Requiring $\rho_{--}^\ast = \rho_{--}$ gives
\begin{align*}
  \gtwo_- = K^2\paraa{(\gtwo_-)^\ast}
\end{align*}
and inserting \eqref{eq:lc.gtwom} gives
\begin{align*}
  K^2(\mu_1)-\mu_1^\ast = qK^{-1}\Xm(h_{z-})+qK\Xp(h_{-z})
\end{align*}
which is solved by
\begin{align}
  \mu_1 = qK^{-1}\Xp(h_{-z})+K^{-1}(f_2)\label{eq:lc.mu1.2}
\end{align}
for arbitrary hermitian $f_2\in\Stq$.

Equations \eqref{eq:lc.mu1.1} and
\eqref{eq:lc.mu1.2} give two expressions for $\mu_1$, which coincide
if
\begin{align}
  f_1-f_2 = q\Xp(h_{-z})-q^{-1}\Xm(h_{+z}).\label{f1.m.f2}
\end{align}
Since $f_1-f_2$ is hermitian, a necessary condition for this
equation to have a solution is that
\begin{align*}
  H = q\Xp(h_{-z})-q^{-1}\Xm(h_{+z})
\end{align*}
is hermitian. Using $K^{-1}\Xpm = q^{\mp 1}\Xpm K^{-1}$ and
$K^{-1}(h_{ab})=h_{ab}$, $H=H^\ast$ is equivalent to
\begin{align}\label{eq:nec.lc.metric.cond}
  \Xp(h_{-z}-q^{-4}h_{z+}) = -q^2\Xm(h_{z-}-q^{-4}h_{+z}).
\end{align}
which can also be written as the reality condition
  \begin{align}\label{eq:nec.lc.metric.cond.real}
    \paraa{\Xp(h_{-z}-q^{-4}h_{z+})}^\ast = \Xp(h_{-z}-q^{-4}h_{z+}).
  \end{align}
If the above condition is fulfilled, then \eqref{f1.m.f2} is solved by
\begin{align*}
  &f_1 = \thalf q\Xp(h_{-z}) - \thalf q^{-1}\Xm(h_{+z})+f_0\\
  &f_2 = -\thalf q\Xp(h_{-z}) + \thalf q^{-1}\Xm(h_{+z})+f_0
\end{align*}
for arbitrary hermitian $f_0\in\Stq$. This gives
\begin{align}
  \mu_1 = \thalf qK^{-1}\Xp(h_{-z})+\thalf q^{-1}K^{-1}\Xm(h_{+z})+K^{-1}(f_0).
\end{align}

\medskip

\noindent
\underline{$\rho_{+-}$}: Equations \eqref{eq:lc.rhoma} and
\eqref{eq:lc.rhopa} give
\begin{align*}
  &\rho_{+-} = -\thalf q^2(1+q^2)h_{+-}+q^2K^{-1}\paraa{\gthree_-}\\
  &\rho_{-+} = \thalf q^{-2}(1+q^2)h_{-+} + q^{-2}K^{-1}\paraa{\gtwo_+}
\end{align*}
and requiring $\rho_{+-}^\ast=\rho_{-+}$ yields
\begin{align*}
  -\thalf q^2(1+q^2)h_{-+}+q^2K\paraa{(\gthree_-)^\ast}
  =\thalf q^{-2}(1+q^2)h_{-+} + q^{-2}K^{-1}\paraa{\gtwo_+}
\end{align*}
which is solved by
\begin{align*}
  &\gthree_- = \thalf(1+q^2)h_{+-}+q^{-2}K(\mu_2^\ast) \\
  &\gtwo_+ = -\thalf(1+q^2)h_{-+}+q^2K(\mu_2)
\end{align*}
for arbitrary $\mu_2\in\Stq$.

\medskip

\noindent
Thus, we have obtained the following equations
\begin{align*}
  &\gone_+ = \thalf q K^{-1}\Xp(h_{-+})+\thalf q^{-1}h_{z+}+q\tau_1\\  
  &\gone_- = -\thalf q^{-1}K^{-1}\Xm(h_{+-})-\thalf q^{-1}h_{z-}+q^{-1}\tau_1^\ast\\
  &\gone_z = \thalf qK^{-1}\Xp(h_{-z})+\thalf q^{-1}K^{-1}\Xm(h_{+z})+K^{-1}(f_0)\\
  &\gtwo_+ = -\thalf(1+q^2)h_{-+}+q^2K(\mu_2)\\
  &\gtwo_- = \thalf q^{-2} K^{-1}\Xm(h_{z-})
    -\thalf q^{-4}K^{-1}\Xp(h_{z+})+\thalf(1+q^2)h_{--}
    -\thalf q^{-4}h_{zz}+q^{-3}K(f_0)\\
  &\gtwo_z = \thalf q^{-2}K^{-1}\Xm(h_{zz})+\thalf(1+q^2)h_{-z}+q^{-2}\tau_4^\ast\\
  &\gthree_+ = \thalf q^{2}K^{-1}\Xp(h_{z+})-\thalf q^4K^{-1}\Xm(h_{z-})
    -\thalf(1+q^2)h_{++}+\thalf q^2h_{zz}+q^3K(f_0)\\
  &\gthree_- = \thalf(1+q^2)h_{+-}+q^{-2}K(\mu_2^\ast)\\
  &\gthree_z = \thalf q^2 K^{-1}\Xp(h_{zz})-\thalf(1+q^2)h_{+z}+q^{2}\tau_4  
\end{align*}
giving all 27 Christoffel symbols as
\begin{align*}
  &\Gammat_{++,+} = \Xp(h_{++})-\thalf q^2\Xm(h_{+-})+h_{+z}+q^2K(\tau_1)\\
  &\Gammat_{+-,-} = -\thalf q^{-2}\Xm(h_{+-})-q^{-2}h_{z-}+q^{-2}K(\tau_1^\ast)\\
  &\Gammat_{+z,z} = \thalf\Xp(h_{zz})+K(\tau_4)\\
  &\Gammat_{++,-} = \thalf\Xp(h_{+-})+K(\gamma_{+-})\\
  &\Gammat_{+-,+} = \thalf\Xp(h_{-+})+K(\tau_1)\\
  &\Gammat_{+z,+} = \thalf\Xp(h_{z+})-\thalf q^2\Xm(h_{z-})
    +\thalf h_{zz}+qK^2(f_0)\\
  &\Gammat_{++,z} = \Xp(h_{+z})-q^2(1+q^2)h_{+-}+q^4\mu_2^\ast\\
  &\Gammat_{+z,-} = q^{-2}(1+q^2)h_{+-}+q^{-4}K^2(\mu_2^\ast)\\
  &\Gammat_{+-,z} = \thalf\Xp(h_{-z})+\thalf q^{-2}\Xm(h_{+z})
    -\thalf q^{-2}h_{zz}+q^{-1}f_0
\end{align*}

\begin{align*}
  &\Gammat_{-+,+} = \thalf q^2\Xp(h_{-+})+h_{z+}+q^2K(\tau_1^\ast)\\
  &\Gammat_{--,-} = \Xm(h_{--})+\thalf q^{-2}\Xp(h_{-+})
    -q^{-2}h_{-z}+q^{-2}K(\tau_1)\\
  &\Gammat_{-z,z} = \thalf\Xm(h_{zz})+K(\tau_4^\ast)\\
  &\Gammat_{-+,-} = \thalf\Xm(h_{+-})+K(\tau_1^\ast)\\
  &\Gammat_{--,+} = \thalf\Xm(h_{-+})+K(\gamma_{+-}^\ast)\\
  &\Gammat_{-+,z} = \thalf\Xm(h_{+z})+\thalf q^{2}\Xp(h_{-z})+\thalf h_{zz}+qf_0\\
  &\Gammat_{-z,+} = -q^2(1+q^2)h_{-+}+q^{-1}K^2(\mu_2)\\
  &\Gammat_{--,z} = \Xm(h_{-z})+q^2(1+q^2)h_{-+}+q^{-4}\mu_2\\
  &\Gammat_{-z,-} = \thalf\Xm(h_{z-})-\thalf q^{-2}\Xp(h_{z+})
    -\thalf q^{-2}h_{zz}+q^{-1}K^2(f_0)
\end{align*}

\begin{align*}
  &\Gammat_{z+,+} = \thalf q^4\Xp(h_{z+})-\thalf q^6\Xm(h_{z-})-q^2(1+q^2)h_{++}
    +\thalf q^4h_{zz}+q^5K^2(f_0)\\
  &\Gammat_{z-,-} = \thalf q^{-4}\Xm(h_{z-})-\thalf q^{-6}\Xp(h_{z+})
    +q^{-2}(1+q^2)h_{--}-\thalf q^{-6}h_{zz}+q^{-5}K^2(f_0)\\
  &\Gammat_{zz,z} = K^2(\rho_{zz})\\
  &\Gammat_{z+,-} = K^2(\mu_2^\ast)\\
  &\Gammat_{z-,+} = K^2(\mu_2)\\
  &\Gammat_{z+,z} = \thalf q^4\Xp(h_{zz})-q^2(1+q^2)h_{+z}+q^4K(\tau_4)\\
  &\Gammat_{zz,+} = -\thalf q^2\Xm(h_{zz})-q^2(1+q^2)h_{z+}+q^4K^3(\tau_4^\ast)\\
  &\Gammat_{z-,z} = \thalf q^{-4}\Xm(h_{zz})+q^{-2}(1+q^2)h_{-z}
    +q^{-4}K(\tau_4^\ast)\\
  &\Gammat_{zz,-} = -\thalf q^{-2}\Xp(h_{zz})+q^{-2}(1+q^2)h_{z-}
    +q^{-4}K^3(\tau_4)   
\end{align*}
depending on the parameters
$\tau_1,\tau_4,\mu_2,\gamma_{+-},\rho_{zz},f_0\in\Stq$ (with
$f_0^\ast=f_0$).

\vspace{4mm}

\noindent
{\bf Acknowledgements:} 
The paper is partially supported by INFN-Trieste. 
GL is supported by INFN, Iniziativa Specifica GAST,
and by INDAM - GNSAGA. JA is supported by grant 2017-03710 from the
Swedish Research Council.  
 Furthermore, JA would like to thank the Department of Mathematics 
 and Geosciences, University of Trieste for hospitality.

\bibliographystyle{alpha}

\end{document}